\newtheorem{Theorem}{Theorem}[section]
\newtheorem{Lemma}[Theorem]{Lemma}
\newtheorem{Proposition}{Proposition}[section]
\numberwithin{equation}{section} \allowdisplaybreaks
\allowdisplaybreaks \setlength{\textwidth}{15cm}
\begin{document}
\author{Yi Peng}
\address{College of Mathematics and Statistics, Chongqing University,
                             Chongqing, 401331,  China.}
\email[Y. Peng]{20170602018t@cqu.edu.cn}
\author{Huaqiao Wang}
\address{College of Mathematics and Statistics, Chongqing University,
                             Chongqing, 401331,  China.}
\email[H.Q. Wang]{wanghuaqiao@cqu.edu.cn}

\title[Derivation of Navier-Stokes equations]
{Rigorous derivation of the compressible Navier-Stokes equations from the two-fluid Navier-Stokes-Maxwell equations}
\thanks{Corresponding author: wanghuaqiao@cqu.edu.cn}
\keywords{Two fluid Navier-Stokes-Maxwell equations, compressible Navier-Stokes equations, singular limit, energy estimates.}\
\subjclass[2010]{35Q35; 35Q30; 35A09; 35B40.}

\begin{abstract}
In this paper, we rigorously derive the compressible one-fluid Navier-Stokes equation from the scaled compressible two-fluid Navier-Stokes-Maxwell equations locally in time under the assumption that the initial data are well prepared. We justify the singular limit by proving the uniform decay of the error system, which is obtained by elaborate energy estimates.
\end{abstract}

\maketitle
\section{Introduction}
Besse-Degond-Deluzet \cite{bdd} derived the scaled two-fluid Euler-Maxwell equation. Based on this work, assuming that the electrons and ions density are equal to $n$, which implies ${\rm div}E=0$, and that the electron-neutral and ion-neutral collision frequencies $\nu_{i}$, $\nu_{e}$ are ignored, we have
\begin{subequations}\label{twofiuld nsm}
\begin{align}
&\partial _tn+{\rm div}(nu_{i})=0,\label{1.1a}\\
&\tau\varepsilon\left(\partial _t\left(nu_{e}\right)+{\rm div}\left(nu_{e}\otimes u_{e}\right)-\mu\Delta u_{e}-(\mu+\lambda)\nabla {\rm div}u_{e}\right)+\eta\nabla P_{e}(n)\notag\\
&\quad\!\! =-\kappa^{-1}n\left(E+u_{e}\times B\right)-\frac{\kappa_{ei}\beta}{\kappa^{2}}Kn^{2}(u_{e}-u_{i}), \label{1.1b}\\
&\tau\left(\partial _t\left(nu_{i}\right)+{\rm div}\left(nu_{i}\otimes u_{i}\right)-\mu\Delta u_{i}-(\mu+\lambda)\nabla {\rm div}u_{i}\right)+\eta\nabla P_{i}(n)\notag\\
&\quad\!\! =\kappa^{-1}n\left(E+u_{i}\times B\right)-\frac{\kappa_{ei}\beta}{\kappa^{2}}Kn^{2}(u_{i}-u_{e}), \label{1.1c}\\
&\alpha\partial _tE-\nabla \times B=-\beta j,\;\;{\rm div}E=0,\label{1.1d}\\
&\partial _tB+\nabla \times E=0,\;\;{\rm div}B=0,\label{1.1e}\\
&\kappa j=n(u_i-u_e).\label{1.1f}
\end{align}
\end{subequations}
Where $u_{i}$, $u_{e}$ denote the velocities, $P_{i}$, $P_{e}$ stand for the pressures, $E$, $B$ are the electric and magnetic field, $j$ is the current density. The constants $\mu$ and $\lambda$ are the viscosity coefficients of the flow satisfying $\mu>0$ and $2\mu+3\lambda>0$. $\varepsilon$ is the mass ratio of electron to ion, $\tau$ is the mean time between ion-neutral collisions, $\eta$ is the measure of the thermal energy, $\kappa$ denotes the number of electron-neutral (or ion-neutral) collisions relative to a rotation period of an electron in the unit of $B$ field, $\kappa_{ei}$ stands for measure of the strength of electron-ion collisions. $\beta$ measures the relative strength of the induced $B$ field to the unit $B$ field, $\alpha$ denotes the squared reciprocal of light speed. The rate constant $K$ evaluate collisions between electrons and ions. Equation \eqref{1.1a} is the mass balance law and \eqref{1.1b}--\eqref{1.1c} are momentum balance laws, while \eqref{1.1d}--\eqref{1.1e} are Maxwell equations.

Noted that using the fact that ${\rm div}j=0$ deduced by \eqref{1.1d} and \eqref{1.1f} we infer that
\begin{align}\label{1.1a1}
\partial _tn+{\rm div}(nu_{e})=0.
\end{align}
Setting
$$ \frac{1}{\varepsilon}P_{e}=P_{i}=P,\; u=u_{i}+\varepsilon u_{e},\;\tilde{j}=\frac{j}{n},$$
\eqref{twofiuld nsm} can be recast as
\begin{equation}\label{twofiuld nsm4}
\begin{cases}
\partial _tn+\frac{1}{1+\varepsilon}{\rm div}(nu)=0,\\
\partial _t(nu)+\frac{1}{1+\varepsilon}\left({\rm div}\left(nu\otimes u\right)+\varepsilon\kappa^{2}{\rm div}\left(n\tilde{j}\otimes \tilde{j}\right)\right)\\
\quad\!\!-\mu\Delta u-(\mu+\lambda)\nabla {\rm div}u+\frac{(1+\varepsilon)\eta}{\tau}\nabla P(n)
=\frac{1}{\tau}n\tilde{j}\times B, \\
\kappa\partial _t(n\tilde{j})+\frac{\varepsilon-1}{1+\varepsilon}\kappa^{2}{\rm div}\left(n\tilde{j}\otimes \tilde{j}\right)\\
\quad\!\!+\frac{\kappa}{1+\varepsilon}\left({\rm div}\left(nu\otimes \tilde{j}\right)+{\rm div}\left(n\tilde{j}\otimes u\right)\right)
-\mu\kappa\Delta\tilde{j}-(\mu+\lambda)\kappa\nabla {\rm div}\tilde{j}\\
\quad\!\!=\frac{1+\varepsilon}{\tau\varepsilon\kappa}nE+\frac{1}{\tau\varepsilon\kappa} nu\times B+\frac{\varepsilon-1}{\tau\varepsilon} n\tilde{j}\times B
-\frac{1+\varepsilon}{\tau\varepsilon\kappa}\kappa_{ei}K\beta n^{2}\tilde{j}, \\
\alpha\partial _tE-\nabla \times B=-\beta n \tilde{j},\;\;{\rm div}E=0,\\
\partial _tB+\nabla \times E=0,\;\;{\rm div}B=0,
\end{cases}
\end{equation}
where the second equation is obtained by the summation of $\frac{1}{\tau}$\eqref{1.1b} and $\frac{1}{\tau}$\eqref{1.1c}, while the third one is obtained by the summation of $-\frac{1}{\tau\varepsilon}$\eqref{1.1b} and $\frac{1}{\tau}$\eqref{1.1c}.
Inserting assumptions $\beta=\alpha^{2},\;\alpha=\kappa^{2}$ and scalings $B\rightarrow \kappa^{2}B,\;E\rightarrow\kappa E$ into \eqref{twofiuld nsm4}, we obtain
\begin{equation}\label{twofiuld nsm1}
\begin{cases}
\partial _tn+\frac{1}{1+\varepsilon}{\rm div}(nu)=0,\\
\partial _t(nu)+\frac{1}{1+\varepsilon}\left({\rm div}\left(nu\otimes u\right)+\varepsilon\kappa^{2}{\rm div}\left(n\tilde{j}\otimes \tilde{j}\right)\right)\\
\quad\!\!-\mu\Delta u-(\mu+\lambda)\nabla {\rm div}u+\frac{(1+\varepsilon)\eta}{\tau}\nabla P(n)
=\frac{\kappa^{2}}{\tau}n\tilde{j}\times B, \\
\kappa\partial _t(n\tilde{j})+\frac{\varepsilon-1}{1+\varepsilon}\kappa^{2}{\rm div}\left(n\tilde{j}\otimes \tilde{j}\right)\\
\quad\!\!+\frac{\kappa}{1+\varepsilon}\left({\rm div}\left(nu\otimes \tilde{j}\right)+{\rm div}\left(n\tilde{j}\otimes u\right)\right)
-\mu\kappa\Delta\tilde{j}-(\mu+\lambda)\kappa\nabla {\rm div}\tilde{j}\\
\quad\!\!=\frac{1+\varepsilon}{\tau\varepsilon}nE+\frac{1}{\tau\varepsilon}\kappa nu\times B+\frac{\varepsilon-1}{\tau\varepsilon}\kappa^{2} n\tilde{j}\times B
-\frac{1+\varepsilon}{\tau\varepsilon}\kappa_{ei}K\kappa^{3}n^{2}\tilde{j}, \\
\kappa\partial _tE-\nabla \times B=-\kappa^{2} n \tilde{j},\;\;{\rm div}E=0,\\
\kappa\partial _tB+\nabla \times E=0,\;\;{\rm div}B=0.
\end{cases}
\end{equation}

Extensive research has been done on two-fluid flows. The local existence of classical solution of \eqref{twofiuld nsm} can be deduced by \cite{k,vh,z}. For more results about the well-poseness to the symmetrizable hyperbolic equations, see \cite{k1,m}. In the case of $n_i\neq n_e$ and the viscosity coefficients $\mu=\lambda=0$, \eqref{twofiuld nsm} be the two-fluid compressible Euler-Maxwell equation. There have been a lot of studies about the asymptotic analysis. For instance, Peng-Wang \cite{pw1} formally established the zero-relaxation limit, the non-relativistic limit and the combined non-relativistic and quasi-neutral limit of the Euler-Maxwell system. Later, all of these results have been justified rigorously. More precisely, Peng-Wang-Gu \cite{pwg} focused on the one-fluid Euler-Maxwell system and derived the drift-diffusion equations under time variable scaling by asymptotic expansions. A same result can be extended to two-fluid Euler-Maxwell system since the energy estimates for Euler equations and Maxwell equations can be carried out separately. For the non-relativistic limit, Yang-Wang \cite{yw} obtained the two-fluid compressible Euler-Poisson system while for the combined non-relativistic and quasi-neutral limit, Li-Peng-Xi \cite{lpx} obtained the one-fluid compressible Euler equations. For more results on asymptotic limits with small parameters of two-fluid flows, we refer to \cite{jllj,jjll,lpw,yw2} and the references therein. For the asymptotic limit problem with boundary effects, see \cite{jl,l} and the references therein.

For the one-fluid flows, Peng-Wang \cite{pw2} considered the combined non-relativistic and quasi-neutral limit of the Euler-Maxwell system in the scaling case if the small parameters $\gamma$ and $\varepsilon$ are equal (where $\gamma$ is the inversely proportional to the light speed and $\varepsilon$ stands for the scaled Debye length), and they obtained the incompressible Euler equation by asymptotic expansions. As an improved work, Peng-Wang \cite{pw} rigorously proved the e-MHD equations as the quasi-neutral limit of compressible one-fluid Euler-Maxwell equations through an elaborate nonlinear energy method. Noted that this result can be extended to the combined non-relativistic and quasi-neutral limit without any relation between $\gamma$ and $\varepsilon$. Later, Jiang-Li \cite{jf} verified the compressible MHD limit of the electromagnetic fluid system. Yang-Wang \cite{yw1} obtained the incompressible Navier-Stokes equations as the combined non-relativistic and quasi-neutral limit of compressible Navier-Stokes-Maxwell equations. For more results on asymptotic limits with small parameters for one-fluid flows, see \cite{cjw,pw3}. For the asymptotic limit problem in a bounded domain, see \cite{ghr,ghr1,pw4,v,x} and the references therein.

For the asymptotic convergence of \eqref{twofiuld nsm} mentioned in \cite{bdd}, there are few theoretical justifications. In this paper, we derive the compressible Navier-Stokes equation from \eqref{twofiuld nsm} rigorously as $\kappa\rightarrow0$, which means that the density of the plasma is large, and $\varepsilon\rightarrow0$.

Before proceeding, let us first introduce the notations and lemmas used throughout this paper. We denote by $H^{l}(\mathbb{R}^{3})$ the standard Sobolev's space in the whole space $\mathbb{R}^{3}$, and shorted by $\left\|\cdot\right\|_{l}$ the norm of the Banach space $H^{l}(\mathbb{R}^{3})$. Let $\alpha=(\alpha_{1},\alpha_{2},\alpha_{3})$, we denote $\partial^{\alpha}=\partial^{\alpha_{1}}\partial^{\alpha_{2}}\partial^{\alpha_{3}}$ and $\left|\alpha\right|=\alpha_{1}+\alpha_{2}+\alpha_{3}$. $A\lesssim B$ means that $A\le CB$ for a constant $C>0$.

The following basic Moser-type calculus inequalities \cite{km,km1} will be used frequently in the proof of the main result.
\begin{Lemma}\label{lem1.1}
For any nonnegative multi-index $\alpha$ with $|\alpha|\leq s$ and $f,g\in H^{s}$, it holds
$$
\left\|\partial_{x}^{\alpha}\left(fg\right)\right\|_{L^{2}}\leq C_{s}\left(\left\| f\right\|_{L^{\infty}}\left\|D_{x}^{s}g\right\|_{L^{2}}+\left\|g\right\|_{L^{\infty}}\left\|D_{x}^{s}f\right\|_{L^{2}}\right),$$
and
$$\left\|\partial_{x}^{\alpha}\left(fg\right)-f\partial_{x}^{\alpha}g\right\|_{L^{2}}\leq C_{s}\left(\left\|D_{x}^{1}f\right\|_{L^{\infty}}\left\|D_{x}^{s-1}g\right\|_{L^{2}}+\left\|g\right\|_{L^{\infty}}\left\|D_{x}^{s}f\right\|_{L^{2}}\right).$$
\end{Lemma}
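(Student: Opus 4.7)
The plan is to prove both inequalities by the classical Leibniz expansion combined with the Gagliardo--Nirenberg interpolation inequality, which is the standard route for Moser-type estimates. For the first inequality, I would write
$$\partial_{x}^{\alpha}(fg)=\sum_{\beta\leq\alpha}\binom{\alpha}{\beta}\partial^{\beta}f\,\partial^{\alpha-\beta}g,$$
and bound each product in $L^{2}$ by H\"older's inequality with dual exponents $p=2s/|\beta|$ and $q=2s/(s-|\beta|)$ for the intermediate terms $0<|\beta|<s$. The Gagliardo--Nirenberg inequality then gives
$$\|\partial^{\beta}f\|_{L^{p}}\lesssim\|f\|_{L^{\infty}}^{1-|\beta|/s}\|D^{s}f\|_{L^{2}}^{|\beta|/s},\qquad \|\partial^{\alpha-\beta}g\|_{L^{q}}\lesssim\|g\|_{L^{\infty}}^{|\beta|/s}\|D^{s}g\|_{L^{2}}^{1-|\beta|/s}.$$
Multiplying these and applying Young's inequality $a^{\theta}b^{1-\theta}\leq\theta a+(1-\theta)b$ collapses the product into the linear combination $\|f\|_{L^{\infty}}\|D^{s}g\|_{L^{2}}+\|g\|_{L^{\infty}}\|D^{s}f\|_{L^{2}}$. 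The endpoint cases $|\beta|=0$ and $|\beta|=|\alpha|=s$ are direct H\"older bounds requiring no interpolation.

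For the commutator inequality, the crucial observation is that the term $\beta=0$ in the Leibniz sum is exactly $f\partial^{\alpha}g$, so it cancels and I only need to estimate
$$\sum_{1\leq|\beta|\leq s}\binom{\alpha}{\beta}\partial^{\beta}f\,\partial^{\alpha-\beta}g.$$
Since each surviving term now carries at least one derivative on $f$, I rewrite $\partial^{\beta}f=\partial^{\beta-e_{j}}(\partial_{j}f)$ and repeat the previous argument with $Df$ playing the role of the base function and the total order of derivatives redistributed among $Df$ (up to order $s-1$) and $g$ (up to order $s-1$). The same H\"older plus Gagliardo--Nirenberg plus Young chain then produces the target bound $\|Df\|_{L^{\infty}}\|D^{s-1}g\|_{L^{2}}+\|g\|_{L^{\infty}}\|D^{s}f\|_{L^{2}}$, where the second summand appears from the endpoint $|\beta|=s$ in which all derivatives land on $f$.

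The only place where care is genuinely needed is the bookkeeping of the interpolation exponents so that the H\"older pairing $1/p+1/q=1/2$ is compatible with the Gagliardo--Nirenberg exponents at each $|\beta|$; once those match, Young's inequality does the rest mechanically. Because the result is well documented in Klainerman--Majda, I would in practice just cite \cite{km,km1} and move on, but the sketch above is the self-contained justification.
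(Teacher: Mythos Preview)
The paper does not prove this lemma at all: it simply states the two Moser-type inequalities and cites \cite{km,km1} as the source, exactly as you yourself suggest doing ``in practice.'' Your sketch via Leibniz expansion, H\"older with exponents $p=2s/|\beta|$, $q=2s/(s-|\beta|)$, Gagliardo--Nirenberg interpolation, and Young's inequality is the standard self-contained argument and is correct; it therefore supplies strictly more than the paper, which treats the result as a black box.
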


Next, we will provide a formal asymptotic analysis of \eqref{twofiuld nsm1}, which will shed light on its links with the compressible Navier-Stokes equation. In order to emphasize the unknowns depending on the singular perturbation parameter $\kappa$, we rewrite the system \eqref{twofiuld nsm1} as
\begin{equation}\label{twofiuld nsm2}
\begin{cases}
\partial _tn^{\kappa}+\frac{1}{1+\varepsilon}{\rm div}(n^{\kappa}u^{\kappa})=0,\\
\partial _t(n^{\kappa}u^{\kappa})+\frac{1}{1+\varepsilon}\left({\rm div}\left(n^{\kappa}u^{\kappa}\otimes u^{\kappa}\right)+\varepsilon\kappa^{2}{\rm div}\left(n^{\kappa}\tilde{j}^{\kappa}\otimes \tilde{j}^{\kappa}\right)\right)\\
\quad\!\!-\mu\Delta u^{\kappa}-(\mu+\lambda)\nabla {\rm div}u^{\kappa}+\frac{(1+\varepsilon)\eta}{\tau}\nabla P(n^{\kappa})
=\frac{\kappa^{2}}{\tau}n^{\kappa}\tilde{j}^{\kappa}\times B^{\kappa}, \\
\kappa\partial _t(n^{\kappa}\tilde{j}^{\kappa})+\frac{\varepsilon-1}{1+\varepsilon}\kappa^{2}{\rm div}\left(n^{\kappa}\tilde{j}^{\kappa}\otimes \tilde{j}^{\kappa}\right)\\
\quad\!\!+\frac{\kappa}{1+\varepsilon}\left({\rm div}\left(n^{\kappa}u^{\kappa}\otimes \tilde{j}^{\kappa}\right)+{\rm div}\left(n^{\kappa}\tilde{j}^{\kappa}\otimes u^{\kappa}\right)\right)
-\mu\kappa\Delta\tilde{j}^{\kappa}-(\mu+\lambda)\kappa\nabla {\rm div}\tilde{j}^{\kappa}\\
\quad\!\!=\frac{1+\varepsilon}{\tau\varepsilon}n^{\kappa}E^{\kappa}+\frac{1}{\tau\varepsilon}\kappa n^{\kappa}u^{\kappa}\times B^{\kappa}+\frac{\varepsilon-1}{\tau\varepsilon}\kappa^{2} n^{\kappa}\tilde{j}^{\kappa}\times B^{\kappa}
-\frac{1+\varepsilon}{\tau\varepsilon}\kappa_{ei}K\kappa^{3}\left(n^{\kappa}\right)^{2}\tilde{j}^{\kappa}, \\
\kappa\partial _tE^{\kappa}-\nabla \times B^{\kappa}=-\kappa^{2} n^{\kappa} \tilde{j}^{\kappa},\;\;{\rm div}E^{\kappa}=0,\\
\kappa\partial _tB^{\kappa}+\nabla \times E^{\kappa}=0,\;\;{\rm div}B^{\kappa}=0,
\end{cases}
\end{equation}
which equipped with the initial data
\begin{align}\label{twofiuld nsm2 initial}
\left(n^{\kappa},u^{\kappa},\tilde{j}^{\kappa},E^{\kappa},B^{\kappa}\right)|_{t=0}=\left(n^{\kappa}_{0}(x),u^{\kappa}_{0}(x)
,\tilde{j}^{\kappa}_{0}(x),E^{\kappa}_{0}(x),B^{\kappa}_{0}(x)\right).
\end{align}
Setting $\kappa\rightarrow0$, we formally get the one-fluid compressible Navier-Stokes system:
\begin{equation}\label{twofiuld nsm3}
\begin{cases}
\partial _tn^{0}+\frac{1}{1+\varepsilon}{\rm div}(n^{0}u^{0})=0,\\
\partial _t(n^{0}u^{0})+\frac{1}{1+\varepsilon}{\rm div}(n^{0}u^{0}\otimes u^{0})-\mu\Delta u^{0}-(\mu+\lambda)\nabla {\rm div}u^{0}+\frac{(1+\varepsilon)\eta}{\tau}\nabla P(n^{0})=0,
\end{cases}
\end{equation}
with the initial data
\begin{align}\label{initialn0u0}
\left(n^{0},u^{0}\right)|_{t=0}=\left(n^{0}_{0}(x),u^{0}_{0}(x)\right).
\end{align}
Then letting $\varepsilon\rightarrow0$, we derive the standard compressible Navier-Stokes equation formally. In this paper, we only prove the case of $\kappa\rightarrow0$ rigorously. For $\varepsilon\rightarrow0$, it is similar and much simpler.
For solvability of $\left(n^{0},u^{0}\right)$, we have the following proposition (see \cite{vh}).
\begin{Proposition}\label{solu n0u0}
Let $s>3+\frac{3}{2}$ and assume that the initial data $(n^{0}_{0},u^{0}_{0})$ satisfy
$$n^{0}_{0},\,u^{0}_{0}\in H^{s}(\mathbb{R}^{3}), \;\;\;0<\hat{n}\leq \inf\limits_{x\in\mathbb{R}^{3}}n^{0}_{0}(x)\leq\sup_{x\in\mathbb{R}^{3}}n^{0}_{0}(x)\leq\hat{\hat{n}}<\infty,$$
for some positive constants $\hat{n}$, $\hat{\hat{n}}$. Then there exist constants $T^{\ast}\in (0,\infty)$ and $\tilde{n},\;\tilde{\tilde{n}}>0$ such that the initial value problem \eqref{twofiuld nsm3}--\eqref{initialn0u0} has a unique solution
\begin{align*}
&n^{0}\in C^{k}([0,T^{\ast}], H^{s-k}(\mathbb{R}^{3})),\;\;
u^{0}\in C^{k}([0,T^{\ast}], H^{s-2k}(\mathbb{R}^{3})),\;\;\;k=0,1,\\
&0<\tilde{n}\leq\inf n^{0}(x,t)\leq\sup n^{0}(x,t)\leq\tilde{\tilde{n}}<\infty.
\end{align*}
\end{Proposition}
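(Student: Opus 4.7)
The plan is to treat \eqref{twofiuld nsm3} as a coupled hyperbolic-parabolic system: the continuity equation is a pure transport equation for $n^0$, while after using the continuity equation to expand the momentum equation we obtain the non-conservative form
\begin{equation*}
n^0\partial_t u^0 + \frac{1}{1+\varepsilon}n^0(u^0\cdot\nabla)u^0 - \mu\Delta u^0 - (\mu+\lambda)\nabla\mathrm{div}\,u^0 + \frac{(1+\varepsilon)\eta}{\tau}\nabla P(n^0)=0,
\end{equation*}
which is uniformly parabolic in $u^0$ as long as $n^0\ge\tilde n>0$. This structure suggests a classical Matsumura--Nishida type iteration, in which the transport equation for $n^0$ and the linear parabolic equation for $u^0$ are solved alternately.

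Concretely, starting from $(n^0_0,u^0_0)(x)$ extended trivially in $t$, I would define inductively $n^0_k$ as the unique solution of the linear transport equation $\partial_t n^0_k + \frac{1}{1+\varepsilon}\mathrm{div}(n^0_k u^0_{k-1})=0$ with initial datum $n^0_0$, and then $u^0_k$ as the unique solution of the linear parabolic problem obtained by freezing the coefficients at $(n^0_k,u^0_{k-1})$, with initial datum $u^0_0$. The method of characteristics preserves the pointwise bounds $\tilde n\le n^0_k\le\tilde{\tilde n}$ on a time interval depending on $\|\mathrm{div}\,u^0_{k-1}\|_{L^\infty}$, and the parabolic problem is uniquely solvable in $C([0,T];H^s)\cap L^2(0,T;H^{s+1})$ by standard Galerkin or Lions--Magenes theory. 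The essential step is then to derive a uniform $H^s$ bound on $(n^0_k,u^0_k)$ over some interval $[0,T^\ast]$ independent of $k$: I would apply $\partial^\alpha$ for $|\alpha|\le s$ to both equations, test against the solutions themselves, and control the commutators via Lemma \ref{lem1.1}, using the embedding $H^{s-1}\hookrightarrow W^{1,\infty}$ guaranteed by $s>3+\tfrac{3}{2}$. The parabolic dissipation $\mu\|\nabla\partial^\alpha u^0_k\|_{L^2}^2+(\mu+\lambda)\|\mathrm{div}\,\partial^\alpha u^0_k\|_{L^2}^2$ absorbs the terms where $u^0_k$ loses one derivative, and a Gronwall argument closes the energy inequality on a uniform time interval.

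Showing that $(n^0_k,u^0_k)$ is Cauchy in $L^2\times L^2$ by the same energy method, passage to the limit produces a solution of \eqref{twofiuld nsm3} in the required regularity class, and uniqueness follows from the analogous estimate applied to the difference of two solutions. The main obstacle is reconciling the formal one-derivative loss in the $\nabla P(n^0)$ and $n^0\,\mathrm{div}\,u^0$ couplings with the available regularity: the corresponding cross terms in the $\partial^\alpha$-energy identity must be integrated by parts so that $\nabla\partial^\alpha n^0$ is contracted against $\mathrm{div}\,\partial^\alpha u^0$, which is integrable in time thanks to the parabolic dissipation. This structural cancellation, together with the propagation of the strict lower bound on the density by the transport equation, is what lets the scheme close at the $H^s$ level; the full details are carried out in \cite{vh}.
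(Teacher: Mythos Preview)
The paper does not actually prove this proposition: it is stated without proof and attributed directly to \cite{vh}, so there is no argument in the paper to compare against. Your sketch of a Matsumura--Nishida type iteration (alternating linear transport for the density and linear parabolic for the velocity, closing uniform $H^s$ bounds via commutator estimates and parabolic dissipation, then showing the sequence is Cauchy in a low norm) is a correct outline of the classical local theory for hyperbolic--parabolic composite systems, and since you also defer the full details to \cite{vh}, your treatment is entirely consistent with---indeed more informative than---what the paper does.
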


Now, we are ready to state our main result of this paper.
\begin{Theorem}\label{main result}
Let $l>2+\frac{3}{2}$ and $P(\cdot)$ be a smooth function on $(0,\infty)$ with $P'(\cdot)>0$. Suppose $\left(n^{0},u^{0}\right)$ be the unique classical solution to the equation \eqref{twofiuld nsm3} on $[0,T^{*})$ given in Proposition \ref{solu n0u0}, where $T^{*}\in(0,\infty)$  be the maximal existence time of $\left(n^{0},u^{0}\right)$. Assume the initial data $\left(n^{\kappa}_{0}(x),u^{\kappa}_{0}(x),\tilde{j}^{\kappa}_{0}(x),E^{\kappa}_{0}(x),B^{\kappa}_{0}(x)\right)$ satisfy
$$\left(n^{\kappa}_{0},u^{\kappa}_{0},\kappa\tilde{j}^{\kappa}_{0},E^{\kappa}_{0},B^{\kappa}_{0}\right)\in H^{l}(\mathbb{R}^{3}),\;\;\;{\rm div}B^{\kappa}_{0}=0,\;{\rm div}E^{\kappa}_{0}=0,$$
and
\begin{align}\label{initial}
\left\|\left(n^{\kappa}_{0}-n^{0}_{0},u^{\kappa}_{0}-u^{0}_{0},\kappa\tilde{j}^{\kappa}_{0},E^{\kappa}_{0},B^{\kappa}_{0}\right)\right\|_{l}\leq C_{0}\kappa
\end{align}
for some constant $C_{0}>0$ independent of $\kappa$. Then for any $T_{0}\in (0,T^{*})$, there exist positive constants $\kappa_{0}(T_{0})$ and $\tilde{C}(T_{0})$, such that the system \eqref{twofiuld nsm2} has unique classical solution $\left(n^{\kappa},u^{\kappa},\kappa\tilde{j}^{\kappa},E^{\kappa},B^{\kappa}\right)$ on $[0,T_{0}]$ with
\begin{align}\label{resutest}
\left\|\left(n^{\kappa}-n^{0},u^{\kappa}-u^{0},\kappa\tilde{j}^{\kappa},E^{\kappa},B^{\kappa}\right)(\cdot,t)\right\|_{H^{l}}\leq \tilde{C}\kappa
\end{align}
for any $\kappa\in(0,\kappa_{0}]$ and $t\in[0,T_{0}]$.
\end{Theorem}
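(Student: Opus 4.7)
The plan is the standard singular-limit scheme for Maxwell-type systems: set up error variables, write an error system, perform weighted $H^l$ energy estimates in which the dangerous $\kappa^{-1}$ coupling cancels, and then close by Gr\"onwall and continuation. Because \eqref{twofiuld nsm3} has a classical $H^s$-solution on $[0,T^*)$ with $n^0$ bounded away from $0$ and $\infty$ (Proposition \ref{solu n0u0}), and because for each fixed $\kappa>0$ the system \eqref{twofiuld nsm2} is symmetrizable in the sense of \cite{k,vh,k1,m}, a local $H^l$-solution of \eqref{twofiuld nsm2} exists; so the proof reduces to a uniform-in-$\kappa$ a priori estimate of the form \eqref{resutest} on the time interval where the solution lives, combined with a bootstrap that turns any such a priori bound into existence on the full $[0,T_0]$.

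First I would introduce the natural error variables
$N=n^{\kappa}-n^{0}$, $U=u^{\kappa}-u^{0}$, $J=\kappa\tilde{j}^{\kappa}$, together with $E^{\kappa}$ and $B^{\kappa}$ themselves, and subtract \eqref{twofiuld nsm3} from the first two lines of \eqref{twofiuld nsm2}. The resulting mass and momentum error equations are a quasilinear perturbation of the Navier--Stokes system whose forcing is $O(\kappa^{2})$ (from ${\rm div}(n^{\kappa}\tilde{j}^{\kappa}\otimes\tilde{j}^{\kappa})=\kappa^{-2}{\rm div}(n^{\kappa}J\otimes J)$ times the factor $\varepsilon\kappa^{2}$, and from $n^{\kappa}\tilde{j}^{\kappa}\times B^{\kappa}=\kappa^{-1}n^{\kappa}J\times B^{\kappa}$ times $\kappa^{2}$). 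The third equation of \eqref{twofiuld nsm2}, kept for $\tilde{j}^{\kappa}$, and the two Maxwell equations are left in their original non-symmetric form; the point is that $J$ and $(E^{\kappa},B^{\kappa})$ are all expected to be $O(\kappa)$, while $\tilde{j}^{\kappa}$ itself is only $O(1)$.

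The decisive step is choosing the correct weights to cancel the singular coupling. Formally, testing the Maxwell pair $(E^{\kappa},B^{\kappa})$ against $(E^{\kappa},B^{\kappa})$ and using $\int\nabla\times B^{\kappa}\cdot E^{\kappa}=\int\nabla\times E^{\kappa}\cdot B^{\kappa}$ gives
\begin{equation*}
\tfrac{\kappa}{2}\tfrac{d}{dt}\bigl(\|E^{\kappa}\|_{L^{2}}^{2}+\|B^{\kappa}\|_{L^{2}}^{2}\bigr)=-\kappa^{2}\int n^{\kappa}\tilde{j}^{\kappa}\cdot E^{\kappa}\,dx,
\end{equation*}
while testing the $\tilde{j}^{\kappa}$-equation against $\tfrac{\tau\varepsilon\kappa}{1+\varepsilon}\tilde{j}^{\kappa}$ and using \eqref{1.1a1} yields the same quantity $\kappa^{2}\int n^{\kappa}\tilde{j}^{\kappa}\cdot E^{\kappa}$ on the right-hand side with the opposite sign. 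Adding them produces a clean identity for the $\kappa$-uniform energy $\tfrac{\tau\varepsilon}{2(1+\varepsilon)}\int n^{\kappa}|J|^{2}+\tfrac{1}{2}(\|E^{\kappa}\|_{L^{2}}^{2}+\|B^{\kappa}\|_{L^{2}}^{2})$ with the full viscous dissipation $\mu\|\nabla J\|_{L^{2}}^{2}+(\mu+\lambda)\|{\rm div}\,J\|_{L^{2}}^{2}$ and a friction-type damping $\tfrac{1+\varepsilon}{\varepsilon}\kappa_{ei}K\kappa^{2}\int(n^{\kappa})^{2}|J|^{2}$ on the left. Combined with the usual hyperbolic-parabolic energy for $(N,U)$, symmetrized against the pressure $P'(n^{0})$ and the density-weighted velocity, one obtains a differential inequality
\begin{equation*}
\tfrac{d}{dt}\mathcal{E}_{l}(t)+\mathcal{D}_{l}(t)\le C\bigl(\|( n^{0},u^{0})\|_{H^{l+1}}\bigr)\,\mathcal{E}_{l}(t)+C\kappa^{2},
\end{equation*}
for $\mathcal{E}_{l}(t)=\|(N,U,J,E^{\kappa},B^{\kappa})(t)\|_{H^{l}}^{2}$; the higher-order derivatives are handled by applying $\partial^{\alpha}$ to the error system for $|\alpha|\le l$ and estimating the commutators through Lemma \ref{lem1.1}, using $l>2+\tfrac{3}{2}$ to obtain $L^{\infty}$-control of $N$, $U$, $J$ and their first derivatives.

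The main obstacle is the third term of \eqref{twofiuld nsm2}, which at top order contains both the singular coefficient $\tfrac{1+\varepsilon}{\tau\varepsilon}n^{\kappa}E^{\kappa}$ and the only $O(1)$ unknown $\tilde{j}^{\kappa}=\kappa^{-1}J$: the cross terms arising when one derives $\partial^{\alpha}$ via Moser commutators must still be of order $\kappa$ after the Maxwell cancellation. To handle this I would apply the weight $\tfrac{\tau\varepsilon\kappa}{1+\varepsilon}$ at every order $|\alpha|\le l$ and pair it with the corresponding $\partial^{\alpha}$-Maxwell identity, so that the singular contribution $\tfrac{1+\varepsilon}{\tau\varepsilon}\int\partial^{\alpha}(n^{\kappa}E^{\kappa})\cdot\partial^{\alpha}\tilde{j}^{\kappa}$ always recombines against $\kappa^{2}\int n^{\kappa}\partial^{\alpha}\tilde{j}^{\kappa}\cdot\partial^{\alpha}E^{\kappa}$ up to a commutator that can be absorbed by the viscous dissipation of $J$ and a small factor of $\kappa$. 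The remaining source terms produced by subtracting \eqref{twofiuld nsm3} are all of order $\kappa$ or $\kappa^{2}$ because they carry explicit $\kappa^{2}$ factors or an $E^{\kappa}$, $B^{\kappa}$, $J$. Granted the a priori estimate $\mathcal{E}_{l}(t)\le\tilde{C}\kappa^{2}$ on the existence interval, the assumption \eqref{initial} and Gr\"onwall's inequality yield $\mathcal{E}_{l}(t)\le\tilde{C}(T_{0})\kappa^{2}$ on $[0,T_{0}]$, which keeps $n^{\kappa}$ uniformly within $[\tilde{n}/2,2\tilde{\tilde{n}}]$ for $\kappa\le\kappa_{0}(T_{0})$ small, and a standard continuation argument promotes the local solution of \eqref{twofiuld nsm2} to $[0,T_{0}]$, proving \eqref{resutest}.
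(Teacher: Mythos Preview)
Your overall strategy---error variables, $H^l$ energy estimates with Moser-type commutator bounds, Gr\"onwall plus bootstrap/continuation---matches the paper's. The difference is in what you identify as the ``main obstacle.'' You frame the proof around a delicate cancellation between the $E^{\kappa}$-term in the $\tilde{j}^{\kappa}$-equation and the current term in Maxwell's equation, with carefully tuned weights. In the paper's setup this cancellation is \emph{not needed}: after the rescaling $E\to\kappa E$, $B\to\kappa^{2}B$ already built into \eqref{twofiuld nsm1}, the error equation \eqref{2.3} for $\tilde{j}^{\kappa}$ carries only an $O(1)$ coefficient in front of $E^{\kappa}$, and once one tests against $(N^{\kappa}+n^{0})\kappa\tilde{j}^{\kappa}$ the resulting term $(E^{\kappa},\kappa(N^{\kappa}+n^{0})\tilde{j}^{\kappa})$ is bounded crudely by $\|E^{\kappa}\|_{l}\|\kappa\tilde{j}^{\kappa}\|_{l}(1+\|N^{\kappa}\|_{l})\lesssim\|W^{\kappa}\|_{l}^{2}+\|W^{\kappa}\|_{l}^{3}$; the paper does exactly this (see the estimates leading to \eqref{es02} and \eqref{es03}), and likewise at higher order. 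The only genuinely singular $\kappa^{-1}$ terms are the Maxwell curls $\tfrac{1}{\kappa}\nabla\times B^{\kappa}$ and $\tfrac{1}{\kappa}\nabla\times E^{\kappa}$, which cancel by the usual antisymmetry. So your weighted cancellation is correct but superfluous here; the paper's direct estimates are simpler.

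A second, minor discrepancy: your closing differential inequality has an inhomogeneous source $+C\kappa^{2}$, whereas the paper's error system \eqref{2.1}--\eqref{2.5} is homogeneous in $(N^{\kappa},U^{\kappa},\kappa\tilde{j}^{\kappa},E^{\kappa},B^{\kappa})$ and yields the purely nonlinear bound $\Gamma^{\kappa}(t)\le\Gamma^{\kappa}(0)+C\int_{0}^{t}\sum_{i=2}^{6}(\Gamma^{\kappa})^{i/2}\,d\tau$; the $\kappa^{2}$-smallness enters only through the initial data assumption \eqref{initial}. This does not affect the bootstrap conclusion.
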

This paper is organized as follows. In Section \ref{sec1}, we derive the error system and introduce the local existence of the unique classical solution. In section \ref{sec2} and section \ref{sec3}, we derive a uniform decay estimate with respect to $\kappa$ of the error system \eqref{2.1}--\eqref{2.6} by employing the energy method. In the last section, based on Lemma \ref{lem2}, we deduce Theorem \ref{main result} by the bootstrap principle.

\section{Derivation of the error system and local existence}\label{sec1}
Let $\left(n^{\kappa},u^{\kappa},\kappa\tilde{j}^{\kappa},E^{\kappa},B^{\kappa}\right)$ be the solution to system \eqref{twofiuld nsm2}--\eqref{twofiuld nsm2 initial} and $\left(n^{0},u^{0}\right)$ be the solution to the system \eqref{twofiuld nsm3}--\eqref{initialn0u0} given in Proposition \ref{solu n0u0}.
Set
$$N^{\kappa}=n^{\kappa}-n^{0},\;U^{\kappa}=u^{\kappa}-u^{0}.$$
For $\rho>0$, define
$$h(\rho)=\int_{1}^{\rho}\frac{P'(s)}{s}ds.$$ Combining the equations \eqref{twofiuld nsm2}--\eqref{initialn0u0}, we obtain the error system:\small{
\begin{align}\label{2.1}
\partial _tN^{\kappa}+\frac{1}{1+\varepsilon}\left({\rm div}\left(\left(N^{\kappa}+n^{0}\right)U^{\kappa}\right)+{\rm div}\left(N^{\kappa}u^{0}\right)\right)=0,
\end{align}
\begin{align}\label{2.2}
\partial _tU^{\kappa}&
+\frac{\varepsilon}{1+\varepsilon}\kappa^{2}\tilde{j}^{\kappa}\cdot\nabla\tilde{j}^{\kappa}
+\frac{1}{1+\varepsilon}\left(\left(U^{\kappa}+u^{0}\right)\cdot\nabla U^{\kappa}+U^{\kappa}\cdot \nabla u^{0}\right)\notag\\
&\quad-\frac{\mu}{N^{\kappa}+n^{0}}\Delta U^{\kappa}-\frac{\mu+\lambda}{N^{\kappa}+n^{0}}\nabla{\rm div} U^{\kappa}+\frac{\eta(1+\varepsilon)}{\tau}\nabla\left(h(N^{\kappa}+n^{0})-h(n^{0})\right)\notag\\
&=\left(\frac{1}{N^{\kappa}+n^{0}}-\frac{1}{n^{0}}\right)\left(\mu\Delta u^{0}+(\mu+\lambda)\nabla{\rm div}u^{0}\right)+\frac{\kappa^{2}}{\tau}\tilde{j}^{\kappa}\times B^{\kappa},
\end{align}
\begin{align}\label{2.3}
\kappa\partial_t\tilde{j}^{\kappa}&+\frac{\kappa}{1+\varepsilon}
\left(\left(U^{\kappa}+u^{0}\right)\cdot\nabla\tilde{j}^{\kappa}+\tilde{j}^{\kappa}\cdot\nabla\left(U^{\kappa}+u^{0}\right)\right)
+\frac{\varepsilon-1}{\varepsilon+1}\kappa^{2}\tilde{j}^{\kappa}\cdot\nabla\tilde{j}^{\kappa}\notag\\
&\quad\!\!-\mu\frac{\kappa}{N^{\kappa}+n^{0}}\Delta\tilde{j}^{\kappa}-(\mu+\lambda)\frac{\kappa}{N^{\kappa}+n^{0}}\nabla{\rm div}\tilde{j}^{\kappa}\notag\\
&=\frac{1+\varepsilon}{\tau\varepsilon}E^{\kappa}+\frac{\kappa}{\tau\varepsilon}(U^{\kappa}+u^{0})\times B^{\kappa}+\frac{\varepsilon-1}{\tau\varepsilon}
\kappa^{2}\tilde{j}^{\kappa}\times B^{\kappa}-\frac{\varepsilon+1}{\tau\varepsilon}\kappa_{ei}K\kappa^{3}\left(N^{\kappa}+n^{0}\right)\tilde{j}^{\kappa},
\end{align}
\begin{flalign}\label{2.4}
\partial _tE^{\kappa}-\frac{1}{\kappa}\nabla \times B^{\kappa}=-\kappa(N^{\kappa}+n^{0}) \tilde{j}^{\kappa},\;\;{\rm div}E^{\kappa}=0,
\end{flalign}
\begin{gather}\label{2.5}
\partial _tB^{\kappa}+\frac{1}{\kappa}\nabla \times E^{\kappa}=0,\;\;{\rm div}B^{\kappa}=0,
\end{gather}
\begin{align}\label{2.6}
\left(N^{\kappa},U^{\kappa},\tilde{j}^{\kappa},E^{\kappa},B^{\kappa}\right)|_{t=0}&=\left(n^{\kappa}_{0}-n^{0}_{0},u^{\kappa}_{0}-u^{0}_{0},
\tilde{j}^{\kappa}_{0},E^{\kappa}_{0},B^{\kappa}_{0}\right)\notag\\
&=:\left(N^{\kappa}_{0},U^{\kappa}_{0},\tilde{j}^{\kappa}_{0},E^{\kappa}_{0},B^{\kappa}_{0}\right).
\end{align}}
Let
$$W_{1}^{\kappa}=\left(
\begin{array}{cc}
U^{\kappa} \\
\kappa\tilde{j}^{\kappa} \\
\end{array}\right),\;\;
W_{2}^{\kappa}=\left(
\begin{array}{cc}
N^{\kappa} \\
E^{\kappa} \\
B^{\kappa}\\
\end{array}\right),\;\;
W^{\kappa}=\left(\begin{array}{cc}
W_{1}^{\kappa}\\
W_{2}^{\kappa}\\
\end{array}\right),$$
$$A_{i}^{\kappa}=\left(
\begin{array}{ccc}
-\frac{1}{1+\varepsilon}\left(u^{0}_{i}+U^{\kappa}_{i}\right)& 0& 0  \\
0 & 0 & \frac{1}{\kappa}B_{i}\\
0 & \frac{1}{\kappa}B_{i}^{T} & 0\\
\end{array}\right),
$$
$$
A_{ij}^{\kappa}=\left(
\begin{array}{cc}
\frac{\mu}{N^{\kappa}+n^{0}}e_{i}^{T}e_{j}I_{3\times3}+\frac{\mu+\lambda}{N^{\kappa}+n^{0}}e_{i}e_{j}^{T}& 0  \\
0 & \frac{\mu}{N^{\kappa}+n^{0}}e_{i}^{T}e_{j}I_{3\times3}+\frac{\mu+\lambda}{N^{\kappa}+n^{0}}e_{i}e_{j}^{T}\\
\end{array}\right),
$$
where
$$B_{1}=\left(
\begin{array}{ccc}
0&0&0\\
0&0&-1\\
0&1&0\\
\end{array}
\right),\;
B_{2}=\left(
\begin{array}{ccc}
0&0&1\\
0&0&0\\
-1&0&0\\
\end{array}
\right),\;
B_{3}=\left(
\begin{array}{ccc}
0&-1&0\\
1&0&0\\
0&0&0\\
\end{array}
\right),\;
e_i=
\begin{array}{cc}
\left(\begin{array}{c}0\\
\vdots\\
1\\
\vdots\\
0 \end{array}\right)
\begin{array}{c}\end{array}.
\end{array}
$$

Then the error system \eqref{2.1}--\eqref{2.6} can be recast as
\begin{equation}\label{matrix}
\begin{cases}
\partial_{t}W_{1}^{\kappa}=\sum\limits_{i,j=1}^{3}A_{ij}^{\kappa}\partial_{x_{i}x_{j}}W_{1}^{\kappa}+F\left(\partial^{\beta}W^{\kappa},\partial^{\beta}n^{0},
\partial^{\alpha}u^{0}\right),\\
\partial_{t}W_{2}^{\kappa}=\sum\limits_{i=1}^{3}A_{i}^{\kappa}\partial_{x_{i}}W_{2}^{\kappa}+G\left(W_{2}^{\kappa}, \partial^{\beta}W_{1}^{\kappa},
\partial^{\beta}n^{0},\partial^{\beta}u^{0}\right),\\
W^{\kappa}|_{t=0}=W^{\kappa}_{0},
\end{cases}
\end{equation}
for some functions $F$ and $G$, and the multi-index $\alpha$ and $\beta$ satisfy $|\alpha|\leq2$, $|\beta|\leq1$.
Noted that the first group of equations is in the form of parabolic system with respect to $W_{1}^{\kappa}$ while the second group of equations is symmetric hyperbolic system with respect to $W_{2}^{\kappa}$. Thus, by the theory in \cite{vh}, we have the following result for initial value problem \eqref{matrix} (or \eqref{2.1}--\eqref{2.6}).
\begin{Proposition}\label{solu nk}
Let $l>2+\frac{3}{2}$ and assume that $\left(n^{0},u^{0}\right)$ satisfy the conditions in Proposition \ref{solu n0u0}. Moreover, suppose that the initial data satisfy $$\left(N^{\kappa}_{0},U^{\kappa}_{0},\kappa\tilde{j}^{\kappa}_{0},E^{\kappa}_{0},B^{\kappa}_{0}\right)\in H^{l}(\mathbb{R}^{3}),\;\;
\inf\limits_{x\in \mathbb{R}^3}N^{\kappa}_{0}>0,\;\;\left\|N^{\kappa}_{0}\right\|_{l}\leq\delta$$
for some small positive constant $\delta$. Then there exist positive constants $T_{\kappa}(0<T_{\kappa}<\infty)$ and $M$ ($M$ only depends on $\delta$) such that the initial data problem \eqref{matrix} has a unique classical solution $\left(N^{\kappa},U^{\kappa},\tilde{j}^{\kappa},E^{\kappa},B^{\kappa}\right)$ satisfying $\left\|N^{\kappa}(t)\right\|_{l}\leq M\delta$, $\inf\limits_{x\in \mathbb{R}^3}N^{\kappa}>0$ and
$$\left(N^{\kappa},E^{\kappa},B^{\kappa}\right)\in C^{k}([0,T_{\kappa}], H^{l-k}(\mathbb{R}^{3})),\;\;
\left(U^{\kappa},\kappa\tilde{j}^{\kappa}\right)\in C^{k}([0,T_{\kappa}], H^{l-2k}(\mathbb{R}^{3})),\;\;\;k=0,1.$$
\end{Proposition}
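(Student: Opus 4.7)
The plan is to follow a standard Picard/iteration scheme for the coupled parabolic-hyperbolic system \eqref{matrix}, exploiting the decomposition the authors have already identified: the first block is parabolic in $W_1^\kappa$ (its principal part $\sum A_{ij}^\kappa \partial_{x_ix_j}W_1^\kappa$ is strongly elliptic since $\mu>0$ and $2\mu+3\lambda>0$, provided $N^\kappa+n^0$ stays bounded below), while the second block is symmetric hyperbolic in $W_2^\kappa$ (the matrices $A_i^\kappa$ are manifestly symmetric after grouping the $\kappa^{-1}B_i$ and $\kappa^{-1}B_i^T$ blocks). Since we only need local existence for \emph{fixed} $\kappa>0$, the singular factors $\kappa^{-1}$ in $A_i^\kappa$ are harmless constants, and the uniform-in-$\kappa$ control is postponed to the later energy estimates.

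First I would set up a linearized iteration: given $(W_1^{(n)},W_2^{(n)})$ with $\inf(N^{(n)}+n^0)\ge\tilde n/2$, define $W_1^{(n+1)}$ as the solution of the linear parabolic system obtained by freezing the coefficients $A_{ij}^\kappa$ and the right-hand side $F$ at $(W_1^{(n)},W_2^{(n)})$, and define $W_2^{(n+1)}$ as the solution of the linear symmetric hyperbolic system obtained by freezing $A_i^{\kappa}$ and $G$ at $(W_1^{(n+1)},W_2^{(n)})$. The linear parabolic problem has a unique solution in $C([0,T],H^l)\cap L^2([0,T],H^{l+1})$ with time derivative in $C([0,T],H^{l-2})$ by standard Galerkin/semigroup theory, and the linear symmetric hyperbolic problem has a unique solution in $\bigcap_{k=0}^{1}C^k([0,T],H^{l-k})$ by Kato's theory applied to the symmetrized system.

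Next I would derive closed energy inequalities for the iterates. Applying $\partial^\alpha$ with $|\alpha|\le l$, multiplying by $\partial^\alpha W_1^{(n+1)}$ (respectively, the symmetrizer times $\partial^\alpha W_2^{(n+1)}$), integrating by parts, and invoking Lemma \ref{lem1.1} to estimate the commutators and product terms, I obtain a Grönwall-type inequality of the form $\frac{d}{dt}\mathcal E^{(n+1)}\lesssim C_\kappa(\mathcal E^{(n+1)}+\mathcal E^{(n)}+1)$, where $C_\kappa$ depends on $\kappa$, $\|(n^0,u^0)\|_{C([0,T^*],H^s)}$ and the \emph{a priori} bound $M\delta$ on $\|N^{(n)}\|_l$. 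This produces a time $T_\kappa>0$ and a constant $M$ (depending only on $\delta$ through the lower bound $\tilde n/2$ and the Moser constants) such that $\|N^{(n)}(t)\|_l\le M\delta$ is preserved for all $n$. A parallel computation on the differences $W^{(n+1)}-W^{(n)}$ in a lower Sobolev norm ($H^{l-1}$ for $W_2$ and the corresponding parabolic norm for $W_1$) yields contraction on a possibly shorter time interval, giving a limit $(W_1^\kappa,W_2^\kappa)$ that solves \eqref{matrix}.

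The main technical obstacle is maintaining the positivity $\inf_x N^{\kappa}(x,t)+n^{0}(x,t)>0$ throughout the iteration. This is handled by choosing $\delta$ small enough that $M\delta<\tilde n/2$ combined with the Sobolev embedding $H^l\hookrightarrow L^\infty$ (valid since $l>2+3/2$), so that $\|N^{(n)}\|_{L^\infty}\le C M\delta <\tilde n/2$ uniformly in $n$ and $t\in[0,T_\kappa]$; this both preserves the ellipticity needed for the parabolic step and permits smooth composition with $h(\cdot)$ and $1/(\cdot)$ via Moser-type estimates. The uniqueness and the regularity $(U^\kappa,\kappa\tilde j^\kappa)\in C^k([0,T_\kappa],H^{l-2k})$ follow by reading off the parabolic gain of two derivatives from the equation, while $(N^\kappa,E^\kappa,B^\kappa)\in C^k([0,T_\kappa],H^{l-k})$ follows from the hyperbolic equation by differentiating \eqref{2.1}, \eqref{2.4}, \eqref{2.5} once in time and inserting the already-obtained spatial regularity of $W_1^\kappa$.
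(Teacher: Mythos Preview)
The paper does not actually give a proof of this proposition: immediately before stating it, the authors observe that \eqref{matrix} has the parabolic--symmetric-hyperbolic structure and then write ``by the theory in \cite{vh}, we have the following result,'' invoking Vol'pert--Hudjaev's local existence theory for composite systems. Your sketch is a correct outline of precisely that theory---Picard iteration alternating between a linear parabolic step and a linear symmetric-hyperbolic step, with closure via Moser-type commutator estimates and a Gr\"onwall argument---so you are supplying the content of the cited reference rather than taking a different route.

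One small remark: your mechanism for preserving $\inf_x(N^\kappa+n^0)>0$ via $\|N^\kappa\|_{L^\infty}\le C M\delta<\tilde n/2$ is exactly what is needed to keep the ellipticity constant of $A_{ij}^\kappa$ uniformly positive and to compose with $h(\cdot)$ and $1/(\cdot)$; the additional conclusion $\inf_x N^\kappa>0$ stated in the proposition then follows from the transport structure of \eqref{2.1} (or equivalently \eqref{3.2'}) for $N^\kappa+n^0$, which propagates the strict lower bound of the initial density along characteristics on a short time interval.
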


We will show that for any $T_{0}<T^{*}$, there exists $\kappa_{0}>0$, such that the existence time $T_{\kappa}>T_{0}$ for any $0<\kappa<\kappa_{0}$. For this purpose, one should establish the uniform estimates on $\left(N^{\kappa},U^{\kappa},\kappa\tilde{j}^{\kappa},E^{\kappa},B^{\kappa}\right)$.

\section{Proof of the main result}
For any $0<T_{1}<1$ independent of $\kappa$, let $T=T^{\kappa}={\rm min}\{T_{1},T_{\kappa}\}$. The positive constant $C$ depends upon $C_{0}$ and $T_{0}$ with $C>C_{0}$. For the sake of convenience, we define
$$\|W^{\kappa}(t)\|^{2}_{l}=\|\left(N^{\kappa},U^{\kappa},\kappa\tilde{j}^{\kappa},E^{\kappa},B^{\kappa}\right)(t)\|^{2}_{l}.$$
\subsection{Zero-order estimates}\label{sec2}
\begin{Lemma}\label{lem1} Under the hypothesis in Theorem \ref{main result}, for any $t\in (0,T)$ and sufficiently small $\kappa$, we have
\small{
\begin{align}
&\left(\|U^{\kappa}\|^{2}+\left\|\kappa\tilde{j}^{\kappa}\right\|^{2}+\|E^{\kappa}\|^{2}+\|G^{\kappa}\|^{2}\right)(t)+\int\int_{0}^{N^{\kappa}}
h\left(s+n^{0}\right)-h\left(n^{0}\right)dsdx\notag\\
&\quad+\int_{0}^{t}\mu\left(\left\|\nabla U^{\kappa}\right\|^{2}+\left\|\nabla \left(\kappa\tilde{j}^{\kappa}\right)\right\|^{2}\right)(\tau)
+(\mu+\lambda)\left(\left\|{\rm div} U^{\kappa}\right\|^{2}+\left\|{\rm div}\left(\kappa\tilde{j}^{\kappa}\right)\right\|^{2}\right)(\tau)d\tau\notag\\
&\leq C\left(\|U^{\kappa}\|^{2}+\left\|\kappa\tilde{j}^{\kappa}\right\|^{2}+\|E^{\kappa}\|^{2}+\|G^{\kappa}\|^{2}+
\int\int_{0}^{N^{\kappa}}h\left(s+n^{0}\right)-h\left(n^{0}\right)dsdx\right)(t=0)\notag\\
&\quad+C\int_{0}^{t}\left(\|W^{\kappa}\|^{2}_{l}+\|W^{\kappa}\|^{3}_{l}+\|W^{\kappa}\|^{4}_{l}\right)(\tau)d\tau.
\end{align}}
\end{Lemma}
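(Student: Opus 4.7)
The estimate is a weighted $L^{2}$ energy identity for \eqref{2.1}--\eqref{2.5} in which the singular factor $1/\kappa$ from the Maxwell curls and the non-small source $\frac{1+\varepsilon}{\tau\varepsilon}E^{\kappa}$ in \eqref{2.3} are eliminated by a skew-symmetric cancellation between the current equation and the Maxwell pair, while the Navier--Stokes block is handled by the classical weighted momentum test combined with the mass equation. I would test \eqref{2.2}, \eqref{2.3}, \eqref{2.4} and \eqref{2.5} against carefully chosen multipliers, using \eqref{2.1} inside the pressure integration by parts, then sum and integrate in time.

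\textbf{Navier--Stokes block.} Let $\Phi(N^{\kappa},n^{0})=\int_{0}^{N^{\kappa}}\bigl(h(s+n^{0})-h(n^{0})\bigr)\,ds$, which is non-negative since $h'=P'/\rho>0$. I multiply \eqref{2.2} by $(N^{\kappa}+n^{0})U^{\kappa}$. This weight is chosen so that: (i) combined with the mass equation for $n^{\kappa}$, the time derivative becomes $\tfrac{1}{2}\frac{d}{dt}\!\int(N^{\kappa}+n^{0})|U^{\kappa}|^{2}\,dx$ modulo a material-derivative drift; (ii) it cancels the factor $1/(N^{\kappa}+n^{0})$ in front of $\Delta U^{\kappa}$ and $\nabla{\rm div}\,U^{\kappa}$, so integration by parts produces the clean dissipation $\mu\|\nabla U^{\kappa}\|^{2}+(\mu+\lambda)\|{\rm div}\,U^{\kappa}\|^{2}$; (iii) after integration by parts on the pressure and replacement of ${\rm div}((N^{\kappa}+n^{0})U^{\kappa})$ by $\partial_{t}N^{\kappa}$ and ${\rm div}(N^{\kappa}u^{0})$ via \eqref{2.1}, one obtains $\frac{d}{dt}\!\int\Phi\,dx$ up to an admissible remainder driven by $\partial_{t}n^{0}$. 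The forcing $\bigl(\tfrac{1}{N^{\kappa}+n^{0}}-\tfrac{1}{n^{0}}\bigr)(\mu\Delta u^{0}+(\mu+\lambda)\nabla{\rm div}\,u^{0})$ on the right of \eqref{2.2} is $O(|N^{\kappa}|)$ times a fixed profile and is bounded by $\|W^{\kappa}\|_{l}^{2}$.

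\textbf{Current--Maxwell block and singular cancellation.} I test \eqref{2.3} against $\kappa(N^{\kappa}+n^{0})\tilde{j}^{\kappa}$. Exactly as in the previous step, this produces $\tfrac{1}{2}\frac{d}{dt}\!\int(N^{\kappa}+n^{0})|\kappa\tilde{j}^{\kappa}|^{2}\,dx$ together with the clean dissipation $\mu\|\nabla(\kappa\tilde{j}^{\kappa})\|^{2}+(\mu+\lambda)\|{\rm div}(\kappa\tilde{j}^{\kappa})\|^{2}$; the Lorentz and collisional terms on the right of \eqref{2.3} all carry at least one extra power of $\kappa$ and therefore give only cubic/quartic remainders, while the dangerous source $\tfrac{1+\varepsilon}{\tau\varepsilon}E^{\kappa}$ yields the contribution $+\tfrac{(1+\varepsilon)\kappa}{\tau\varepsilon}\!\int(N^{\kappa}+n^{0})E^{\kappa}\!\cdot\tilde{j}^{\kappa}\,dx$. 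I then test \eqref{2.4} against $\tfrac{1+\varepsilon}{\tau\varepsilon}E^{\kappa}$ and \eqref{2.5} against $\tfrac{1+\varepsilon}{\tau\varepsilon}B^{\kappa}$ and sum: the identity $\int(\nabla\times B^{\kappa})\!\cdot\! E^{\kappa}\,dx=\int(\nabla\times E^{\kappa})\!\cdot\! B^{\kappa}\,dx$ annihilates the two $O(1/\kappa)$ curl contributions, leaving $\tfrac{1+\varepsilon}{2\tau\varepsilon}\frac{d}{dt}(\|E^{\kappa}\|^{2}+\|B^{\kappa}\|^{2})$ together with $-\tfrac{(1+\varepsilon)\kappa}{\tau\varepsilon}\!\int(N^{\kappa}+n^{0})E^{\kappa}\!\cdot\tilde{j}^{\kappa}\,dx$, which exactly kills the leftover from the current test.

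\textbf{Closing and main obstacle.} Adding the four identities produces the left-hand side of the lemma (the weights $N^{\kappa}+n^{0}$ being equivalent to $1$ by the a priori lower/upper bounds on $n^{\kappa}$), dissipation included. Every remaining nonlinear term---convective, Lorentz, collisional, the viscous forcing from \eqref{2.2}, and the drift terms arising from the time-dependent weights---is estimated using Lemma \ref{lem1.1} and the Sobolev embedding $H^{l}\hookrightarrow L^{\infty}$ (valid since $l>2+\tfrac{3}{2}$), placing $L^{\infty}$ on the ``low'' factor and $L^{2}$ on the ``high'' one; together with the uniform bounds on $(n^{0},u^{0},\partial_{t}n^{0},\partial_{t}u^{0})$ from Proposition \ref{solu n0u0}, each such term is majorized by $\|W^{\kappa}\|_{l}^{2}+\|W^{\kappa}\|_{l}^{3}+\|W^{\kappa}\|_{l}^{4}$, and time integration on $(0,t)$ closes the estimate. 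The main obstacle is the weight choice in the current--Maxwell block: the factor $\kappa(N^{\kappa}+n^{0})$ on $\tilde{j}^{\kappa}$ is the unique one that simultaneously produces the positive quadratic $\|\kappa\tilde{j}^{\kappa}\|^{2}$ with clean viscous dissipation \emph{and} matches the Maxwell source with exactly the opposite sign, so that the $O(1)$ coupling between $E^{\kappa}$ and $\tilde{j}^{\kappa}$ disappears from the identity. A secondary nuisance is the commutator $\tfrac{1}{2}\!\int\partial_{t}(N^{\kappa}+n^{0})|\kappa\tilde{j}^{\kappa}|^{2}\,dx$ (and its $U^{\kappa}$ analogue) stemming from the time-dependent weight, which must be controlled by bounding $\partial_{t}N^{\kappa}$ in $L^{\infty}$ through \eqref{2.1} and Sobolev embedding.
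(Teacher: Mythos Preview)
Your proposal is correct and uses exactly the same multipliers as the paper: $(N^{\kappa}+n^{0})U^{\kappa}$ for \eqref{2.2}, $\kappa(N^{\kappa}+n^{0})\tilde{j}^{\kappa}$ for \eqref{2.3}, and $E^{\kappa}$, $B^{\kappa}$ for the Maxwell pair, with the pressure term converted via \eqref{2.1} into $\frac{d}{dt}\int\Phi\,dx$.

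The one substantive difference is that the paper does \emph{not} arrange the exact cancellation you set up between the two $E^{\kappa}\!\cdot\!\tilde{j}^{\kappa}$ contributions. It simply bounds each of them crudely: after testing \eqref{2.3} the term $\tfrac{1+\varepsilon}{\tau\varepsilon}\bigl(E^{\kappa},\kappa(N^{\kappa}+n^{0})\tilde{j}^{\kappa}\bigr)$ is already $\lesssim\|\kappa\tilde{j}^{\kappa}\|_{l}\|E^{\kappa}\|_{l}(1+\|N^{\kappa}\|_{l})\lesssim\|W^{\kappa}\|_{l}^{2}+\|W^{\kappa}\|_{l}^{3}$, and likewise for the Maxwell source $-\bigl(\kappa(N^{\kappa}+n^{0})\tilde{j}^{\kappa},E^{\kappa}\bigr)$; hence neither is ``dangerous'' once the weight $\kappa$ has been applied, and the paper tests \eqref{2.4}--\eqref{2.5} with the unscaled $E^{\kappa},B^{\kappa}$. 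Your cancellation trick is a pleasant observation but not the main obstacle---the weight $\kappa(N^{\kappa}+n^{0})$ is forced already by the requirement that the energy be $\|\kappa\tilde{j}^{\kappa}\|^{2}$ and that the viscous terms integrate cleanly, not by any need to match the Maxwell source. Otherwise the two arguments coincide.
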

\begin{proof}
Multiplying \eqref{2.2} by $(N^\kappa+n^0)U^k$, and integrating over the whole space, we obtain
\begin{align}\label{Ul2}
&\dfrac{1}{2}\dfrac{d}{dt}\int\left(N^\kappa+n^0\right){|U^\kappa|}^2dx+\mu{\|\nabla{U^\kappa}\|}^2+(\mu+\lambda){\|{\rm div}{U^\kappa}\|}^2\notag\\
&=\dfrac{\eta(1+\varepsilon)}{\tau}\left(h\left(N^\kappa+n^0\right)-h\left(n^0\right),{\rm div}\left(\left(N^\kappa+n^0\right)U^\kappa\right)\right)+\dfrac{1}{2}\int\partial_t\left({N^\kappa}
+{n^0}\right){|U^\kappa|}^2dx\notag\\
&\quad-\dfrac{1}{1+\varepsilon}\left(\left(U^\kappa+u^0\right)\cdot\nabla{U^\kappa}+U^\kappa\cdot\nabla{u^0},\left(N^\kappa+n^0\right)U^\kappa\right)\notag\\
&\quad-\dfrac{\varepsilon}{1+\varepsilon}\kappa^2\left(\tilde{j}^{\kappa}\cdot\nabla\tilde{j}^{\kappa},\left(N^\kappa+n^0\right)U^\kappa\right)+\dfrac{\kappa^2}{\tau}
\left(\tilde{j}^{\kappa}\times B^\kappa,\left(N^\kappa+n^0\right)U^\kappa\right)\notag\\
&\quad+\int\left(\dfrac{1}{N^\kappa+n^0}-\dfrac{1}{n^0}\right)\left(\mu\Delta{u^0}
+(\mu+\lambda)\nabla{{\rm div}u^0}\right)\left(N^\kappa+n^0\right)U^\kappa dx.
\end{align}
Firstly, we deal with the second term on the right-hand side of \eqref{Ul2}. Noted that
\begin{align}\label{3.2'}\partial_t\left({N^\kappa}+{n^0}\right)+\frac{1}{1+\varepsilon}{\rm div}\left(\left(N^\kappa+n^0\right)\left(U^\kappa+u^0\right)\right)=0,
\end{align}
in view of the regularity of $(n^{0},u^{0})$, H{\"o}lder's inequality and Sobolev's imbedding theorem, we have
\begin{align}\label{3.2}
\begin{split}
\dfrac{1}{2}\int\partial_t\left({N^\kappa}+{n^0}\right){|U^\kappa|}^2dx&=-\frac{1}{2(1+\varepsilon)}\int{\rm div}\left(\left(N^\kappa+n^0\right)\left(U^\kappa+u^0\right)\right){|U^\kappa|}^2dx\\
&\lesssim\left(1+\|N^\kappa\|_l+\|U^\kappa\|_l+\|N^\kappa\|_l\|U^\kappa\|_l\right){\|U^\kappa\|}^2.
\end{split}
\end{align}
For the last four terms on the right-hand side of \eqref{Ul2}, by using the regularity of $(n^{0},u^{0})$, H{\"o}lder's inequality, Sobolev's imbedding theorem and the bounds on $N^{\kappa}$ and $n^{0}$ stated in Proposition \ref{solu n0u0} and Proposition \ref{solu nk}, we have
\begin{align}
\left|-\dfrac{1}{1+\varepsilon}\left(\left(U^\kappa+u^0\right)\cdot\nabla{U^\kappa}+U^\kappa\cdot\nabla{u^0},\left(N^\kappa+n^0\right)U^\kappa\right)\right|
\lesssim\left(1+\|N^\kappa\|_l+\|U^\kappa\|_l\right){\|U^\kappa\|_l}^2,
\end{align}
\begin{align}
\left|-\dfrac{\varepsilon}{1+\varepsilon}\kappa^2\left(\tilde{j}^{\kappa}\cdot\nabla\tilde{j}^{\kappa},\left(N^\kappa+n^0\right)U^\kappa\right)\right|
\lesssim(1+\|N^\kappa\|_l)\|U^\kappa\|_l\left\|\kappa\tilde{j}^{\kappa}\right\|_l^2,
\end{align}
\begin{align}
\left|\dfrac{\kappa^2}{\tau}\left(\tilde{j}^{\kappa}\times B^\kappa,\left(N^\kappa+n^0\right)U^\kappa\right)\right|
\lesssim\kappa\left\|\kappa{\tilde{j}}^\kappa\right\|_l\|U^\kappa\|_l\|B^\kappa\|_l(1+\|N^\kappa\|_l),
\end{align}
\begin{align}
\begin{split}
&\left|\int\left(\dfrac{1}{N^\kappa+n^0}-\dfrac{1}{n^0}\right)\left(\mu\Delta{u^0}+(\mu+\lambda)\nabla{{\rm div}u^0}\right)\left(N^\kappa+n^0\right)U^\kappa dx\right|\\
&\quad\lesssim(1+\|N^\kappa\|)\|U^\kappa\|\|N^\kappa\|_l.
\end{split}
\end{align}
Now, we deal with the first term on the right-hand side of \eqref{Ul2}. From equation \eqref{2.1}, we deduce
\begin{align}\label{3.3}
&\dfrac{\eta(1+\varepsilon)}{\tau}\int\left(h\left(N^\kappa+n^0\right)-h\left(n^0\right)\right)\left(-(1+\varepsilon)\partial_tN^\kappa-{\rm div}\left(N^\kappa u^0\right)\right)dx\notag\\
&=-\frac{\eta(1+\varepsilon)^2}{\tau}\left(\frac{d}{dt}\int\!\!\int_{0}^{N^{\kappa}} h\left(s+n^0\right)-h\left(n^0\right)dsdx
-\int\!\!\int_{0}^{N^{\kappa}}\left(h'\left(s+n^0\right)-h'\left(n^0\right)\right)\partial _{t}n^0dsdx\right)\notag\\
&\quad-\frac{\eta(1+\varepsilon)}{\tau}\int\left(h\left(N^\kappa+n^0\right)-h\left(n^0\right)\right){\rm div}\left(N^\kappa u^0\right)dx\notag\\
&\leq-\frac{\eta(1+\varepsilon)^2}{\tau}\frac{d}{dt}\int \int_{0}^{N^{\kappa}}h\left(s+n^0\right)-h\left(n^0\right)dsdx+C\parallel N^\kappa\parallel^{2}_{l},
\end{align}
where the last inequality is deduced by the regularity of $(n^{0},u^{0})$ and the upper bounds on $N^{\kappa}$ and $n^0$.
Inserting \eqref{3.2}--\eqref{3.3} into \eqref{Ul2}, we have
\begin{align}\label{es01}
\begin{split}
&\frac{1}{2}\frac{d}{dt}\int \left(N^\kappa+n^0\right)\left| U^{\kappa}\right|^{2}dx+\frac{\eta(1+\varepsilon)^2}{\tau}\frac{d}{dt}\int\int_{0}^{N^{\kappa}} h\left(s+n^0\right)-h\left(n^0\right)dsdx\\
&\quad\!\!+\mu\left\| \nabla U^{\kappa}\right\|^{2}+(\mu+\lambda)\parallel {\rm div}U^{\kappa}\parallel^{2}\\
&\lesssim\|W^{\kappa}\|^{2}_{l}+\|W^{\kappa}\|^{3}_{l}+\|W^{\kappa}\|^{4}_{l}.
\end{split}
\end{align}

Similarly, multiplying \eqref{2.3} by $(N^\kappa+n^0)\kappa\tilde{j}^k$,  and then integrating over the whole space, we have
\begin{align}\label{3.11}
\begin{split}
&\frac{1}{2}\frac{d}{dt}\int\left|\kappa\tilde{j}^{\kappa}\right|^{2}(N^\kappa+n^0)dx
+\mu\left\|\nabla\left(\kappa\tilde{j}^{\kappa}\right)\right\|^{2}+(\mu+\lambda)\left\| {\rm div}\left(\kappa\tilde{j}^{\kappa}\right)\right\|^{2}\\
&=-\frac{\kappa^2}{1+\varepsilon}\left(\left(U^{\kappa}+u^0\right)\cdot\nabla \tilde{j}^{\kappa}+\tilde{j}^{\kappa}\cdot\nabla\left(U^{\kappa}+u^0\right),\left(N^{\kappa}+n^0\right)\tilde{j}^{\kappa}\right)\\
&\quad-\frac{\varepsilon-1}{\varepsilon+1}\kappa^3\left(\tilde{j}^{\kappa}\cdot\nabla\tilde{j}^{\kappa},\left(N^{\kappa}+n^0\right)\tilde{j}^{\kappa}\right)
+\frac{\varepsilon+1}{\varepsilon\tau}\left(E^{\kappa},\kappa\left(N^{\kappa}+n^0\right)\tilde{j}^{\kappa}\right)\\
&\quad+\frac{\kappa^2}{\varepsilon\tau}\left(\left(U^{\kappa}+u^0\right)\times B^{\kappa},\left(N^{\kappa}
+n^0\right)\tilde{j}^{\kappa}\right)
+\frac{\varepsilon-1}{\varepsilon\tau}\kappa^3\left(\tilde{j}^{\kappa}\times B^{\kappa},\left(N^{\kappa}+n^0\right)\tilde{j}^{\kappa}\right)\\
&\quad-\frac{\varepsilon+1}{\varepsilon\tau}\kappa_{e_{i}}K\kappa^4\left(\left(N^{\kappa}+n^0\right)\tilde{j}^{\kappa},\left(N^{\kappa}+n^0\right)\tilde{j}^{\kappa}\right)
+\frac{1}{2}\int\left| \kappa\tilde{j}^{\kappa}\right|^{2}\partial_{t}\left(N^{\kappa}+n^0\right)dx.
\end{split}
\end{align}
Applying the regularity of $(n^{0},u^{0})$, H{\"o}lder's inequality and Sobolev's imbedding theorem, the terms on the right-hand side of \eqref{3.11} can be estimated as:
\begin{align}\label{3.12}
\begin{split}
&\left|-\frac{\kappa^2}{1+\varepsilon}\left(\left(U^{\kappa}+u^0\right)\cdot\nabla \tilde{j}^{\kappa}+\tilde{j}^{\kappa}\cdot\nabla\left(U^{\kappa}+u^0\right),\left(N^{\kappa}+n^0\right)\tilde{j}^{\kappa}\right)\right|\\
&\quad\lesssim\left\|\kappa\tilde{j}^{\kappa}\right\|^{2}_{l}\left(1+\left\|N^{\kappa}\right\|_{l}+\|U^{\kappa}\|_{l}+\|N^{\kappa}\|_{l}\|U^{\kappa}\|_{l}\right),
\end{split}
\end{align}
\begin{align}
\left|-\frac{\varepsilon-1}{\varepsilon+1}\kappa^3\left(\tilde{j}^{\kappa}\cdot\nabla\tilde{j}^{\kappa},\left(N^{\kappa}+n^0\right)\tilde{j}^{\kappa}\right)\right|
\lesssim\left\|\kappa\tilde{j}^{\kappa}\right\|^{3}_{l}\left(1+\left\|N^{\kappa}\right\|_{l}\right),
\end{align}
\begin{align}
\left|\frac{\varepsilon+1}{\varepsilon\tau}\left(E^{\kappa},\kappa\left(N^{\kappa}+n^0\right)\tilde{j}^{\kappa}\right)\right|
\lesssim\left\|\kappa\tilde{j}^{\kappa}\right\|_{l}\left\|E^{\kappa}\right\|_{l}\left(1+\left\|N^{\kappa}\right\|_{l}\right),
\end{align}
\begin{align}
\begin{split}
&\left|\frac{\kappa^2}{\varepsilon\tau}\left(\left(U^{\kappa}+u^0\right)\times B^{\kappa},\left(N^{\kappa}+n^0\right)\tilde{j}^{\kappa}\right)\right|\\
&\quad\!\!\lesssim\kappa\left\|\kappa\tilde{j}^{\kappa}\right\|_{l}\left\|B^{\kappa}\right\|_{l}\left(1+\left\|N^{\kappa}\right\|_{l}+\|U^{\kappa}\|_{l}+\|N^{\kappa}\|_{l}\|U^{\kappa}\|_{l}\right),
\end{split}
\end{align}
\begin{align}
\left|\frac{\varepsilon-1}{\varepsilon\tau}\kappa^3\left(\tilde{j}^{\kappa}\times B^{\kappa},\left(N^{\kappa}+n^0\right)\tilde{j}^{\kappa}\right)\right|
\lesssim\kappa\left\|\kappa\tilde{j}^{\kappa}\right\|^2_{l}\left\|B^{\kappa}\right\|_{l}\left(1+\left\|N^{\kappa}\right\|_{l}\right),
\end{align}
\begin{align}
\left|-\frac{\varepsilon+1}{\varepsilon\tau}\kappa_{e_{i}}K\kappa^4\left(\left(N^{\kappa}+n^0\right)\tilde{j}^{\kappa},\left(N^{\kappa}+n^0\right)\tilde{j}^{\kappa}\right)\right|
\lesssim\kappa^{2}\left\|\kappa\tilde{j}^{\kappa}\right\|^{2}_{l}\left(1+\|N^{\kappa}\|_{l}+\|N^{\kappa}\|^{2}_{l}\right),
\end{align}
\begin{align}\label{3.18}
\begin{split}
\left|\frac{1}{2}\int\left| \kappa\tilde{j}^{\kappa}\right|^{2}\partial_{t}\left(N^{\kappa}+n^0\right)dx\right|&=\left|-\frac{1}{2(1+\varepsilon)}\int\left| \kappa\tilde{j}^{\kappa}\right|^{2}{\rm div}\left(\left(N^\kappa+n^0\right)\left(U^\kappa+u^0\right)\right)dx\right|\\
&\lesssim\left\|\kappa\tilde{j}^{\kappa}\right\|^{2}_{l}\left(1+\left\|N^{\kappa}\right\|_{l}+\|U^{\kappa}\|_{l}+\|N^{\kappa}\|_{l}\|U^{\kappa}\|_{l}\right),
\end{split}
\end{align}
where we have used \eqref{3.2'} in the last equality. Substituting \eqref{3.12}--\eqref{3.18} into \eqref{3.11}, we conclude that
\begin{align}\label{es02}
\begin{split}
&\frac{1}{2}\frac{d}{dt}\int \left(N^\kappa+n^0\right)\left| \kappa\tilde{j}^{\kappa}\right|^{2}dx+\mu\left\|\nabla \left(\kappa\tilde{j}^{\kappa}\right)\right\|^{2}+(\mu+\lambda)\left\|{\rm div}\left(\kappa\tilde{j}^{\kappa}\right)\right\|^{2}\\
&\lesssim\|W^{\kappa}\|^{2}_{l}+\|W^{\kappa}\|^{3}_{l}+\|W^{\kappa}\|^{4}_{l}.
\end{split}
\end{align}

Then, multiplying \eqref{2.4}, \eqref{2.5} by $E^{\kappa}$ and $G^{\kappa}$ respectively, and then integrating over the whole space, one has
\begin{align}\label{es03}
\begin{split}
\frac{1}{2}\frac{d}{dt}\left(\left\|E^{\kappa}\right\|^{2}+\left\|G^{\kappa}\right\|^{2}\right)
&=-\left(\kappa\left(N^{\kappa}+n^{0}\right)\tilde{j}^{\kappa},E^{\kappa}\right)\\
&\lesssim\left\|\kappa\tilde{j}^{\kappa}\right\|_{l}\|E^{\kappa}\|_{l}\left(1+\|N^{\kappa}\|_{l}\right)\\
&\lesssim \|W^{\kappa}\|^{2}_{l}+\left\|W^{\kappa}\right\|^{3}_{l},
\end{split}
\end{align}
where we have used the regularity of $n^{0}$, H{\"o}lder's inequality and Sobolev's imbedding theorem.

Therefore, combining \eqref{es01}, \eqref{es02} and \eqref{es03}, we have
\begin{align}
\begin{split}
&\frac{1}{2}\frac{d}{dt}\int\left(N^{\kappa}+n^{0}\right)\left(|U^{\kappa}|^{2}+\left|\kappa\tilde{j}^{\kappa}\right|^{2}\right)dx
+\frac{1}{2}\frac{d}{dt}\left(\|E^{\kappa}\|^{2}+\|G^{\kappa}\|^{2}\right)\\
&\quad\!\!+\frac{\eta\left(1+\varepsilon\right)}{\tau}\frac{d}{dt}\int\int^{N^{\kappa}}_{0} h\left(s+n^{0}\right)-h\left(n_{0}\right)dsdx
+\mu\|\nabla U^{\kappa}\|^{2}+(\mu+\lambda)\|{\rm div}U^{\kappa}\|^{2}\\
&\quad\!\!+\mu\left\|\nabla\left(\kappa\tilde{j}^{\kappa}\right)\right\|^{2}+(\mu+\lambda)\left\|{\rm div}\left(\kappa\tilde{j}^{\kappa}\right)\right\|^{2}\\
&\lesssim\|W^{\kappa}\|^{2}_{l}+\|W^{\kappa}\|^{3}_{l}+\|W^{\kappa}\|^{4}_{l}.
\end{split}
\end{align}
By employing the fact that $N^{\kappa}+n^{0}>\tilde{n}>0$, which is deduced by $\inf\limits_{x\in \mathbb{R}^3}N^{\kappa}>0$, then we conclude \eqref{lem1}.
\end{proof}

\subsection{Higher order estimates}\label{sec3}
\begin{Lemma}\label{lem2}Under the hypothesis in Theorem \ref{main result}, for any $t\in (0,T)$ and sufficiently small $\kappa$, we have
\begin{align}
&\left(\left\|U^{\kappa}\right\|_{l}^{2}+\left\|\kappa\tilde{j}^{\kappa}\right\|_{l}^{2}+\left\|E^{\kappa}\right\|_{l}^{2}+\left\|G^{\kappa}\right\|_{l}^{2}\right)(t)
+\int_{0}^{t}\left(\left\|U^{\kappa}\right\|_{l+1}^{2}+\left\|\kappa\tilde{j}^{\kappa}\right\|_{l+1}^{2}\right)(\tau)d\tau\notag\\
&\quad+\int\int_{0}^{N^{\kappa}}h\left(s+n^{0}\right)-h\left(n^{0}\right)dsdx+\sum\limits_{1\leq\alpha\leq l}\int\frac{h'\left(N^{\kappa}+n^{0}\right)}{N^{\kappa}+n^{0}}\left|\partial^{\alpha}N^{\kappa}\right|^{2}dx\notag\\
&\leq\left(\left\|U^{\kappa}\right\|_{l}^{2}+\left\|\kappa\tilde{j}^{\kappa}\right\|_{l}^{2}+\left\|E^{\kappa}\right\|_{l}^{2}+\left\|G^{\kappa}\right\|_{l}^{2}\right)(t=0)
+\left(\sum\limits_{1\leq\alpha\leq l}\int\frac{h'\left(N^{\kappa}+n^{0}\right)}{N^{\kappa}+n^{0}}\left|\partial^{\alpha}N^{\kappa}\right|^{2}dx\right)(t=0)\notag\\
&\quad+\left(\int\int_{0}^{N^{\kappa}}h\left(s+n^{0}\right)-h\left(n^{0}\right)dsdx\right)(t=0)+C\int_{0}^{t}\sum\limits_{i=2}^{6}\|W^{\kappa}\|^{i}_{l}(\tau)d\tau
\end{align}
\end{Lemma}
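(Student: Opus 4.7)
The plan is to differentiate the error system \eqref{2.1}--\eqref{2.5} by $\partial^\alpha$ for each multi-index $\alpha$ with $1\le |\alpha|\le l$ and close an $H^l$ energy identity by the same choice of symmetrizing multipliers used in Lemma \ref{lem1}: I would test $\partial^\alpha$\eqref{2.2} against $(N^\kappa+n^0)\partial^\alpha U^\kappa$, test $\partial^\alpha$\eqref{2.3} against $(N^\kappa+n^0)\kappa\partial^\alpha\tilde j^\kappa$, and test the differentiated Maxwell equations against $\partial^\alpha E^\kappa$ and $\partial^\alpha B^\kappa$ respectively. For the continuity equation \eqref{2.1}, the natural multiplier is $\frac{\eta(1+\varepsilon)h'(N^\kappa+n^0)}{N^\kappa+n^0}\partial^\alpha N^\kappa$, which is precisely the weight producing the symmetric norm $\int\frac{h'(N^\kappa+n^0)}{N^\kappa+n^0}|\partial^\alpha N^\kappa|^2\,dx$ appearing on the left-hand side of the claim. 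Summing the resulting identities over $1\le|\alpha|\le l$ and adding the zero-order bound from Lemma \ref{lem1} yields a master estimate that one then converts into the stated inequality using the uniform lower bound $N^\kappa+n^0\ge\tilde n>0$ from Proposition \ref{solu nk}.

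Three structural cancellations must be exploited to keep the right-hand side uniform in $\kappa$. First, the singular $\kappa^{-1}\nabla\times$ in \eqref{2.4}--\eqref{2.5} disappears thanks to the antisymmetric pairing $\kappa^{-1}(\partial^\alpha\nabla\times B^\kappa,\partial^\alpha E^\kappa)+\kappa^{-1}(\partial^\alpha\nabla\times E^\kappa,\partial^\alpha B^\kappa)=0$ after integration by parts, so no inverse power of $\kappa$ survives in the combined identity. Second, the top-order pressure gradient $\frac{\eta(1+\varepsilon)}{\tau}\nabla(h(N^\kappa+n^0)-h(n^0))$ paired with $(N^\kappa+n^0)\partial^\alpha U^\kappa$ cancels against the continuity multiplier $\frac{h'}{N^\kappa+n^0}\partial^\alpha N^\kappa$ coupled with $\text{div}((N^\kappa+n^0)\partial^\alpha U^\kappa)$ up to commutators that are lower-order. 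Third, the $\kappa^2 n\tilde j\times B$ and related Lorentz-type terms are small by explicit factors of $\kappa$ once one regroups them in terms of $\kappa\tilde j^\kappa$.

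The main obstacle is controlling the commutators generated by the variable-coefficient viscosity. Writing
$\partial^\alpha\!\left(\tfrac{\mu}{N^\kappa+n^0}\Delta U^\kappa\right)=\tfrac{\mu}{N^\kappa+n^0}\partial^\alpha\Delta U^\kappa+\bigl[\partial^\alpha,\tfrac{\mu}{N^\kappa+n^0}\bigr]\Delta U^\kappa,$
and similarly for the $(\mu+\lambda)$ term and its analogue for $\tilde j^\kappa$, the leading piece produces the clean dissipation $\mu\|\nabla\partial^\alpha U^\kappa\|^2+(\mu+\lambda)\|\text{div}\,\partial^\alpha U^\kappa\|^2$ and its $\kappa\tilde j^\kappa$ counterpart after integration by parts, while the commutators and the transport commutators $[\partial^\alpha,(U^\kappa+u^0)\cdot\nabla]U^\kappa$, $[\partial^\alpha,(U^\kappa+u^0)\cdot\nabla]\tilde j^\kappa$ are bounded by Lemma \ref{lem1.1} and absorbed into $\tfrac12\mu\|U^\kappa\|_{l+1}^2+\tfrac12\mu\|\kappa\tilde j^\kappa\|_{l+1}^2$ via Young's inequality. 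The uniform lower bound on $N^\kappa+n^0$ guarantees that all rational factors of the density are smooth and bounded in $H^l$.

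The cubic-through-sextic right-hand side $\sum_{i=2}^6\|W^\kappa\|_l^i$ then arises naturally: linear and quadratic contributions come from the lower-order transport and Maxwell couplings (as in Lemma \ref{lem1}), while cubic to sextic terms appear from products like $\|N^\kappa\|_l\|U^\kappa\|_l\|\kappa\tilde j^\kappa\|_l\|B^\kappa\|_l$ and from weighted pressure commutators such as $\partial^\alpha(h(N^\kappa+n^0)-h(n^0))-h'(N^\kappa+n^0)\partial^\alpha N^\kappa$ which by Lemma \ref{lem1.1} produce polynomial expressions in $\|N^\kappa\|_l$ up to degree four when paired with a $\partial^\alpha U^\kappa$ carrying another two factors of $W^\kappa$. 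Collecting all such contributions, integrating in time, and invoking $\int_0^{N^\kappa}(h(s+n^0)-h(n^0))\,ds\sim\tfrac12 h'(n^0)(N^\kappa)^2$ to match the $L^2$ part of the density norm at $t=0$, completes the proof of the stated inequality.
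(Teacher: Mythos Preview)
Your overall strategy---differentiate, use commutator estimates from Lemma~\ref{lem1.1}, exploit the Maxwell antisymmetry, and absorb the viscosity commutators into the dissipation via Young's inequality---is exactly what the paper does. However, your choice of symmetrizing multipliers contains a genuine mismatch that would prevent the argument from closing.

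You propose to test $\partial^{\alpha}$\eqref{2.2} against $(N^{\kappa}+n^{0})\partial^{\alpha}U^{\kappa}$ and $\partial^{\alpha}$\eqref{2.1} against $\tfrac{h'(N^{\kappa}+n^{0})}{N^{\kappa}+n^{0}}\partial^{\alpha}N^{\kappa}$. With these weights the top-order pressure contribution from the momentum equation is (up to harmless factors)
\[
\int h'(N^{\kappa}+n^{0})\,(N^{\kappa}+n^{0})\,\nabla\partial^{\alpha}N^{\kappa}\cdot\partial^{\alpha}U^{\kappa}\,dx,
\]
while the top-order divergence contribution from the continuity equation, after pairing with your multiplier, is
\[
\int h'(N^{\kappa}+n^{0})\,\partial^{\alpha}N^{\kappa}\,\mathrm{div}\,\partial^{\alpha}U^{\kappa}\,dx.
\]
These differ by the variable factor $(N^{\kappa}+n^{0})$ and therefore do \emph{not} combine into a total divergence; the residual carries $l{+}1$ derivatives on $N^{\kappa}$, for which there is no dissipation available. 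The weighted multiplier $(N^{\kappa}+n^{0})U^{\kappa}$ works at order zero in Lemma~\ref{lem1} only because no commutators intervene there.

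The paper avoids this by testing $\partial^{\alpha}$\eqref{2.2} with the \emph{unweighted} function $\partial^{\alpha}U^{\kappa}$, obtaining the weighted dissipation $\int\tfrac{1}{N^{\kappa}+n^{0}}|\nabla\partial^{\alpha}U^{\kappa}|^{2}$ from the viscosity after one integration by parts. The pressure term then becomes $\tfrac{\eta(1+\varepsilon)}{\tau}\bigl(h'(N^{\kappa}+n^{0})\partial^{\alpha}N^{\kappa},\partial^{\alpha}\mathrm{div}\,U^{\kappa}\bigr)$ plus Moser-type commutators. Rather than testing \eqref{2.1} separately, the paper substitutes \eqref{2.1} directly to write $\mathrm{div}\,U^{\kappa}=-\tfrac{1}{N^{\kappa}+n^{0}}\bigl((1+\varepsilon)\partial_{t}N^{\kappa}+\dots\bigr)$, so that the principal pairing becomes
\[
-\frac{\eta(1+\varepsilon)^{2}}{\tau}\Bigl(h'(N^{\kappa}+n^{0})\partial^{\alpha}N^{\kappa},\tfrac{1}{N^{\kappa}+n^{0}}\partial^{\alpha}\partial_{t}N^{\kappa}\Bigr)
=-\frac{\eta(1+\varepsilon)^{2}}{2\tau}\frac{d}{dt}\int\frac{h'(N^{\kappa}+n^{0})}{N^{\kappa}+n^{0}}|\partial^{\alpha}N^{\kappa}|^{2}dx+\text{lower order},
\]
which is exactly the weighted density norm in the statement. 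If you prefer to test the continuity equation separately while keeping the $(N^{\kappa}+n^{0})$ weight on the momentum multiplier, the compatible continuity multiplier is $h'(N^{\kappa}+n^{0})\partial^{\alpha}N^{\kappa}$ (no denominator), which yields the equivalent norm $\int h'(N^{\kappa}+n^{0})|\partial^{\alpha}N^{\kappa}|^{2}$ instead. Either correction closes the estimate; your remaining observations about the Maxwell cancellation, the Lorentz terms, and the polynomial right-hand side are all correct.
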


\begin{proof}
Set $1\leq\alpha\leq l$. Applying the operator $\partial^{\alpha}$ to \eqref{2.2}, and taking the inner product of the resulting equation and $\partial^{\alpha}U^{\kappa}$, we obtain
\begin{align}\label{hoe}
\frac{1}{2}\frac{d}{dt}\left\|\partial^{\alpha}U^{\kappa}\right\|^{2}
&=\mu\left(\partial^{\alpha}\left(\frac{\Delta U^{\kappa}}{N^{\kappa}+n^{0}}\right), \partial^{\alpha}U^{\kappa}\right)+(\mu+\lambda)\left(\partial^{\alpha}\left(\frac{\nabla{\rm div} U^{\kappa}}{N^{\kappa}+n^{0}}\right), \partial^{\alpha}U^{\kappa}\right)\notag\\
&\quad-\frac{\eta(1+\varepsilon)}{\tau}\left(\partial^{\alpha}\nabla\left(h(N^{\kappa}+n^{0})-h(n^{0})\right), \partial^{\alpha}U^{\kappa}\right)\notag\\
&\quad-\frac{1}{1+\varepsilon}\left(\partial^{\alpha}\left((U^{\kappa}+u^{0})\cdot\nabla U^{\kappa}\right),\partial^{\alpha}U^{\kappa}\right)
-\frac{1}{1+\varepsilon}\left(\partial^{\alpha}\left(U^{\kappa}\cdot\nabla u^{0}\right),\partial^{\alpha}U^{\kappa}\right)\notag\\
&\quad-\frac{\varepsilon}{1+\varepsilon}\kappa^{2}\left(\partial^{\alpha}\left(\tilde{j}^{\kappa}\cdot\nabla \tilde{j}^{\kappa}\right),\partial^{\alpha}U^{\kappa}\right)\notag\\
&\quad+\mu\left(\partial^{\alpha}\left(
\left(\frac{1}{N^{\kappa}+n^{0}}-\frac{1}{n^{0}}\right)\Delta u^{0}\right), \partial^{\alpha}U^{\kappa}\right)\notag\\
&\quad+(\mu+\lambda)\left(\partial^{\alpha}\left(
\left(\frac{1}{N^{\kappa}+n^{0}}-\frac{1}{n^{0}}\right)\nabla{\rm div} u^{0}\right), \partial^{\alpha}U^{\kappa}\right)\notag\\
&\quad+\frac{\kappa^{2}}{\tau}\left(\partial^{\alpha}\left(\tilde{j}^{\kappa}\times B^{\kappa}
\right), \partial^{\alpha}U^{\kappa}\right)\notag\\
&=:\sum\limits_{i=1}^{9}I^{(i)}.
\end{align}
Next, we will handel with all terms on the right-hand side of \eqref{hoe} one by one. For the term $I^{(1)}$, integrating by parts and using the fact that $N^{\kappa}+n^{0}>\tilde{n}>0$, we have
\begin{align*}
I^{(1)}&=\mu\left(\frac{\partial^{\alpha}\Delta U^{\kappa}}{N^{\kappa}+n^{0}}, \partial^{\alpha}U^{\kappa}\right)+\mu\left(\mathcal{H}^{(1)}_{U}, \partial^{\alpha}U^{\kappa}\right)\\
&=-\mu\int\frac{1}{N^{\kappa}+n^{0}}\left|\partial^{\alpha}\nabla U^{\kappa}\right|^{2}dx-\mu\left(\partial^{\alpha}\nabla U^{\kappa},\partial^{\alpha} U^{\kappa}\nabla\left(\frac{1}{N^{\kappa}+n^{0}}\right)\right)+\mu\left(\mathcal{H}^{(1)}_{U}, \partial^{\alpha}U^{\kappa}\right)\\
&\leq-\mu\int\frac{1}{N^{\kappa}+n^{0}}\left|\partial^{\alpha}\nabla U^{\kappa}\right|^{2}dx+
C\left\|U^{\kappa}\right\|_{l+1}\left\|U^{\kappa}\right\|_{l}\left(1+\left\|N^{\kappa}\right\|_{l}\right)+\mu\left\|\mathcal{H}^{(1)}_{U}\right\|\|U^{\kappa}\|_l,
\end{align*}
where
$$\mathcal{H}^{(1)}_{U}=\partial^{\alpha}\left(\frac{\Delta U^{\kappa}}{N^{\kappa}+n^{0}}\right)-\frac{\partial^{\alpha}\Delta U^{\kappa}}{N^{\kappa}+n^{0}}.$$
Applying the Moser-type inequalities in Lemma \ref{lem1.1}, one has
\begin{align*}
\left\|\mathcal{H}^{(1)}_{U}\right\|
&\lesssim\left\|D\left(\frac{1}{N^{\kappa}+n^{0}}\right)\right\|_{\infty}\left\|D^{l-1}\Delta U^{\kappa}\right\|
+\left\|\Delta U^{\kappa}\right\|_{\infty}\left\|D^{l}\left(\frac{1}{N^{\kappa}+n^{0}}\right)\right\|\\
&\lesssim\left(1+\left\|N^{\kappa}\right\|_{l}\right)\left\|U^{\kappa}\right\|_{l+1}.
\end{align*}
Thus, one deduces that
\begin{align}\label{i1}
I^{(1)}\leq-\mu\int\frac{1}{N^{\kappa}+n^{0}}\left|\partial^{\alpha}\nabla U^{\kappa}\right|^{2}dx+C\left\|U^{\kappa}\right\|_{l+1}\left\|U^{\kappa}\right\|_{l}\left(1+\left\|N^{\kappa}\right\|_{l}\right).
\end{align}

Similarly, for the term $I^{(2)}$, we have
\begin{align}\label{i2}
I^{(2)}\leq-(\mu+\lambda)\int\frac{1}{N^{\kappa}+n^{0}}\left|\partial^{\alpha}{\rm div} U^{\kappa}\right|^{2}dx+C\left\|U^{\kappa}\right\|_{l+1}\left\|U^{\kappa}\right\|_{l}\left(1+\left\|N^{\kappa}\right\|_{l}\right).
\end{align}

Next, we will deal with the term $I^{(3)}$. Integrating by parts and applying Leibniz's formula, we obtain
\begin{align}\label{i3bds}
I^{(3)}
&=\frac{\eta(1+\varepsilon)}{\tau}\left(\partial^{\alpha}\left(h(N^{\kappa}+n^{0})-h(n^{0})\right), \partial^{\alpha}{\rm div}U^{\kappa}\right)\notag\\
&=\frac{\eta(1+\varepsilon)}{\tau}\sum\limits_{\beta\leq\alpha,|\beta|=1}\left(\partial^{\alpha-\beta}\left(h'(N^{\kappa}+n^{0})\partial^{\beta}N^{\kappa}+
\left(h'(N^{\kappa}+n^{0})-h'(n^{0})\right)\partial^{\beta}n^{0}\right), \partial^{\alpha}{\rm div}U^{\kappa}\right)\notag\\
&=\frac{\eta(1+\varepsilon)}{\tau}\left(h'(N^{\kappa}+n^{0})\partial^{\alpha}N^{\kappa}, \partial^{\alpha}{\rm div}U^{\kappa}\right)+\frac{\eta(1+\varepsilon)}{\tau}\left(\mathcal{H}_{U}^{(31)}, \partial^{\alpha}{\rm div}U^{\kappa}\right)\notag\\
&\quad+\frac{\eta(1+\varepsilon)}{\tau}\sum\limits_{\beta\leq\alpha,|\beta|=1}\left(\partial^{\alpha-\beta}\left(
\left(h'(N^{\kappa}+n^{0})-h'(n^{0})\right)\partial^{\beta}n^{0}\right), \partial^{\alpha}{\rm div}U^{\kappa}\right)\notag\\
&=:I^{(31)}+I^{(32)}+I^{(33)},
\end{align}
where
$$\mathcal{H}_{U}^{(31)}=\sum\limits_{\beta\leq\alpha,|\beta|=1}\partial^{\alpha-\beta}\left(h'(N^{\kappa}+n^{0})\partial^{\beta}N^{\kappa}\right)-h'(N^{\kappa}+n^{0})\partial^{\alpha}N^{\kappa}.$$
Applying the Moser-type inequalities in Lemma \ref{lem1.1}, we have
\begin{align*}
\left\|\mathcal{H}^{(31)}_{U}\right\|
&\lesssim\left\|Dh'(N^{\kappa}+n^{0})\right\|_{\infty}\left\|D^{l-1}\partial^{\beta}N^{\kappa}\right\|
+\left\|\partial^{\beta}N^{\kappa}\right\|_{\infty}\left\|D^{l}h'(N^{\kappa}+n^{0})\right\|\\
&\lesssim\left(1+\left\|N^{\kappa}\right\|_{l}\right)\left\|N^{\kappa}\right\|_{l}.
\end{align*}
Therefore, for the term $I^{(32)}$, one has
\begin{align}\label{i32}
I^{(32)}\lesssim\left\|\partial^{\alpha}{\rm div}U^{\kappa}\right\|\left(\left\|W^{\kappa}\right\|_{l}+\left\|W^{\kappa}\right\|_{l}^{2}\right).
\end{align}
For the term $I^{(33)}$, integrating by parts yields that
\begin{align}\label{i33}
I^{(33)}\lesssim\left\|U^{\kappa}\right\|_{l}\left\|N^{\kappa}\right\|_{l}\lesssim\left\|W^{\kappa}\right\|_{l}^{2}.
\end{align}
For the term $I^{(31)}$, using the equation \eqref{2.1}, we divide it into three parts:
\begin{align}\label{i31bds}
I^{(31)}&=\left(h'(N^{\kappa}\!+\!n^{0})\partial^{\alpha}N^{\kappa},\partial^{\alpha}\!\left(\frac{1}{N^{\kappa}+n^{0}}
\left((1+\varepsilon)\partial_{t}N^{\kappa}\!+\!U^{\kappa}\cdot
\nabla(N^{\kappa}\!+\!n^{0})\!+\!{\rm div}(N^{\kappa}u^{0})\right)\right)\!\right)\notag\\
&\quad \times\left(-\frac{\eta(1+\varepsilon)}{\tau}\right)\\
&=:I^{(311)}+I^{(312)}+I^{(313)}.\notag
\end{align}
Firstly, we deal with the term $I^{(311)}$. To this end, we rewrite $I^{(311)}$ as
\begin{align}\label{i311bds}
I^{(311)}&=-\frac{\eta(1+\varepsilon)^2}{\tau}\left(h'(N^{\kappa}+n^{0})\partial^{\alpha}N^{\kappa},\frac{\partial^{\alpha}\partial_{t}N^{\kappa}}{N^{\kappa}+n^{0}}\right)
-\frac{\eta(1+\varepsilon)^2}{\tau}\left(h'(N^{\kappa}+n^{0})\partial^{\alpha}N^{\kappa},\mathcal{H}^{(32)}_{U^{\kappa}}\right)\notag\\
&=-\frac{\eta(1+\varepsilon)^2}{2\tau}\frac{d}{dt}\!\!\int\frac{h'(N^{\kappa}+n^{0})}{N^{\kappa}+n^{0}}|\partial^{\alpha}N^{\kappa}|^{2}dx
+\frac{\eta(1+\varepsilon)^2}{2\tau}\!\!\int\!\partial_{t}\left(\frac{h'(N^{\kappa}+n^{0})}{N^{\kappa}+n^{0}}\right)|\partial^{\alpha}N^{\kappa}|^{2}dx\notag\\
&\quad-\frac{\eta(1+\varepsilon)^2}{\tau}\left(h'(N^{\kappa}+n^{0})\partial^{\alpha}N^{\kappa},\mathcal{H}^{(32)}_{U^{\kappa}}\right),
\end{align}
where
\begin{align*}
\mathcal{H}^{(32)}_{U^{\kappa}}=\partial^{\alpha}\left(\frac{\partial_{t}N^{\kappa}}{N^{\kappa}+n^{0}}\right)-\frac{1}{N^{\kappa}+n^{0}}\partial^{\alpha}\partial_{t}N^{\kappa}.
\end{align*}
Combining the Moser-type inequalities in Lemma \ref{lem1.1} and the equation \eqref{2.1}, we have
\begin{align*}
\left\|\mathcal{H}^{(32)}_{U^{\kappa}}\right\|&\lesssim\left\|D(\frac{1}{N^{\kappa}+n^{0}})\right\|_{\infty}\left\|D^{l-1}\partial_{t}N^{\kappa}\right\|
+\left\|\partial_{t}N^{\kappa}\right\|_{\infty}\left\|D^{l}(\frac{1}{N^{\kappa}+n^{0}})\right\|\\
&\lesssim\left(1+\|N^{\kappa}\|_{l}\right)\left(\|U^{\kappa}\|_{l}+\|N^{\kappa}\|_{l}+\|U^{\kappa}\|_{l}\|N^{\kappa}\|_{l}\right)\\
&\lesssim\|W^{\kappa}\|_{l}+\|W^{\kappa}\|^{2}_{l}+\|W^{\kappa}\|^{3}_{l},
\end{align*}
from which we can infer that the last term on the right-hand side of \eqref{i311bds} can be controlled by
$$C\left(\|W^{\kappa}\|^{2}_{l}+\|W^{\kappa}\|^{3}_{l}+\|W^{\kappa}\|^{4}_{l}\right).$$
Then, we deal with the second term on the right-hand side of \eqref{i311bds}. By using the equation \eqref{3.2'}, the smoothness of $h$ and the bounds on $N^{\kappa}+n^{0}$, we deduce
\begin{align*}
\left\|\partial_{t}\left(\frac{h'(N^{\kappa}+n^{0})}{N^{\kappa}+n^{0}}\right)\right\|_{\infty}&=\left\|\frac{h''(N^{\kappa}+n^{0})}{N^{\kappa}+
n^{0}}\partial_{t}(N^{\kappa}+n^{0})-\frac{h'(N^{\kappa}+n^{0})}{(N^{\kappa}+n^{0})^{2}}\partial_{t}(N^{\kappa}+n^{0})\right\|_{\infty}\\
&\lesssim\left\|{\rm div}\left((N^{\kappa}+n^{0})(U^{\kappa}+u^{0})\right)\right\|_{\infty}\\
&\lesssim 1+\|U^{\kappa}\|_{l}+\|N^{\kappa}\|_{l}+\|U^{\kappa}\|_{l}\|N^{\kappa}\|_{l}.
\end{align*}
Thus, we obtain
\begin{align}\label{i311}
I^{(311)}\leq-\frac{\eta(1+\varepsilon)^2}{\tau}\frac{1}{2}\frac{d}{dt}\int\frac{h'(N^{\kappa}+n^{0})}{N^{\kappa}+n^{0}}\left|\partial^{\alpha}N^{\kappa}\right|^{2}dx
+C\left(\|W^{\kappa}\|^{2}_{l}+\|W^{\kappa}\|^{3}_{l}+\|W^{\kappa}\|^{4}_{l}\right).
\end{align}
Next, we deal with the term $I^{(312)}$. Integrating by parts, using the regularity of $(n^0, u^0)$, Sobolev's imbedding theorem and the bounds on $N^{\kappa}+n^{0}$, we have
\begin{align*}
I^{(312)}&=-\frac{\eta(1+\varepsilon)}{\tau}\left(h'(N^{\kappa}+n^{0})\partial^{\alpha}N^{\kappa},\partial^{\alpha}\left(\frac{1}{N^{\kappa}+n^{0}}
U^{\kappa}\cdot\nabla\left(N^{\kappa}+n^{0}\right)\right)\right)\\
&=-\frac{\eta(1+\varepsilon)}{\tau}\left(h'(N^{\kappa}+n^{0})\partial^{\alpha}N^{\kappa},\frac{1}{N^{\kappa}+n^{0}}
U^{\kappa}\cdot\partial^{\alpha}\nabla\left(N^{\kappa}+n^{0}\right)\right)\\
&\quad-\frac{\eta(1+\varepsilon)}{\tau}\left(h'(N^{\kappa}+n^{0})\partial^{\alpha}N^{\kappa},\mathcal{H}^{(33)}_{U^{\kappa}}\right)\\
&=-\frac{\eta(1+\varepsilon)}{\tau}\left(h'(N^{\kappa}+n^{0})\partial^{\alpha}N^{\kappa},\frac{U^{\kappa}\cdot\partial^{\alpha}\nabla n^{0}}{N^{\kappa}+n^{0}}\right)\\
&\quad+\frac{1}{2}\frac{\eta(1+\varepsilon)}{\tau}\left(\partial^{\alpha}N^{\kappa},\partial^{\alpha}N^{\kappa}{\rm div}\left(\frac{h'(N^{\kappa}+n^{0})U^{\kappa}}{N^{\kappa}+n^{0}}\right)\right)\!-\!\frac{\eta(1+\varepsilon)}{\tau}
\left(h'(N^{\kappa}+n^{0})\partial^{\alpha}N^{\kappa},\mathcal{H}^{(33)}_{U^{\kappa}}\right)\\
&\lesssim\|N^{\kappa}\|_{l}\|U^{\kappa}\|_{l}+\|N^{\kappa}\|^2_{l}\|U^{\kappa}\|_{l}+\|N^{\kappa}\|_{l}\left\|\mathcal{H}^{(33)}_{U^{\kappa}}\right\|,
\end{align*}
where
$$\mathcal{H}^{(33)}_{U^{\kappa}}=\partial^{\alpha}\left(\frac{1}{N^{\kappa}+n^{0}}
U^{\kappa}\cdot\nabla\left(N^{\kappa}+n^{0}\right)\right)-\frac{1}{N^{\kappa}+n^{0}}U^{\kappa}\cdot\nabla\partial^{\alpha}\left(N^{\kappa}+n^{0}\right).$$
Furthermore, in view of Moser-type inequalities in Lemma \ref{lem1.1}, we have
\begin{align*}
\left\|\mathcal{H}^{(33)}_{U^{\kappa}}\right\|&\lesssim\left\|D\left(\frac{U^{\kappa}}{N^{\kappa}+n^{0}}\right)\right\|_{\infty}
\left\|D^{l-1}\nabla\left(N^{\kappa}+n^{0}\right)\right\|+\left\|\nabla\left(N^{\kappa}+n^{0}\right)\right\|_{\infty}
\left\|D^{l}\left(\frac{U^{\kappa}}{N^{\kappa}+n^{0}}\right)\right\|\\
&\lesssim\left(1+\|N^{\kappa}\|_{l}\right)^{2}\|U^{\kappa}\|_{l}.
\end{align*}
Therefore, one deduces that
\begin{align}\label{i312}
I^{(312)}\lesssim\left\|W^{\kappa}\right\|_{l}^{2}+\left\|W^{\kappa}\right\|_{l}^{3}+\left\|W^{\kappa}\right\|_{l}^{4}.
\end{align}

Now, we deal with the term $I^{(313)}$. Integrating by parts,  utilizing the regularity of $(n^0, u^0)$, Sobolev's imbedding theorem and the bounds on $N^{\kappa}$ and $n^{0}$, we have
\begin{align*}
I^{(313)}&=-\frac{\eta(1+\varepsilon)}{\tau}\left(h'(N^{\kappa}+n^{0})\partial^{\alpha}N^{\kappa},\partial^{\alpha}\left(\frac{1}{N^{\kappa}+n^{0}}
N^{\kappa}{\rm div}u^{0}\right)\right)\\
&\quad-\frac{\eta(1+\varepsilon)}{\tau}\left(h'(N^{\kappa}+n^{0})\partial^{\alpha}N^{\kappa},
\partial^{\alpha}\left(\frac{1}{N^{\kappa}+n^{0}}u^{0}\cdot\nabla N^{\kappa}\right)\right)\\
&\leq C\|N^{\kappa}\|^2_{l}
-\frac{\eta(1+\varepsilon)}{\tau}\left(h'(N^{\kappa}+n^{0})\partial^{\alpha}N^{\kappa},\frac{u^{0}\cdot\nabla\partial^{\alpha}N^{\kappa}}{N^{\kappa}+n^{0}}\right)\\
&\quad-\frac{\eta(1+\varepsilon)}{\tau}\left(h'(N^{\kappa}+n^{0})\partial^{\alpha}N^{\kappa},\mathcal{H}^{(34)}_{U^{\kappa}}\right)\\
&\leq C\|N^{\kappa}\|^2_{l}
+\frac{1}{2}\frac{\eta(1+\varepsilon)}{\tau}\left(\partial^{\alpha}N^{\kappa},\partial^{\alpha}N^{\kappa}{\rm div}\left(\frac{h'(N^{\kappa}+n^{0})}{N^{\kappa}+n^{0}}u^{0}\right)\right)\\
&\quad-\frac{\eta(1+\varepsilon)}{\tau}\left(h'(N^{\kappa}+n^{0})\partial^{\alpha}N^{\kappa},\mathcal{H}^{(34)}_{U^{\kappa}}\right)\\
&\lesssim \|N^{\kappa}\|^2_{l}+\|N^{\kappa}\|_{l}\left\|\mathcal{H}^{(34)}_{U^{\kappa}}\right\|,
\end{align*}
where
$$\mathcal{H}^{(34)}_{U^{\kappa}}=\partial^{\alpha}\left(\frac{u^{0}\cdot\nabla N^{\kappa}}{N^{\kappa}+n^{0}}
\right)-\frac{u^{0}\cdot\nabla \partial^{\alpha}N^{\kappa}}{N^{\kappa}+n^{0}}.$$
Applying Moser-type inequalities in Lemma \ref{lem1.1} again, we infer that
\begin{align*}
\left\|\mathcal{H}^{(34)}_{U^{\kappa}}\right\|&\lesssim\left\|D\left(\frac{u^{0}}{N^{\kappa}+n^{0}}\right)\right\|_{\infty}
\left\|D^{l-1}\nabla N^{\kappa}\right\|+\left\|\nabla N^{\kappa}\right\|_{\infty}
\left\|D^{l}\left(\frac{u^{0}}{N^{\kappa}+n^{0}}\right)\right\|\\
&\lesssim\left(1+\|N^{\kappa}\|_{l}\right)\|N^{\kappa}\|_{l}.
\end{align*}
Therefore, one has
\begin{align}\label{i313}
I^{(313)}\lesssim\left\|W^{\kappa}\right\|_{l}^{2}+\left\|W^{\kappa}\right\|_{l}^{3}.
\end{align}
Combining \eqref{i311}, \eqref{i312} and \eqref{i313}, we obtain
\begin{align}\label{i31}
I^{(31)}\leq -\frac{\eta(1+\varepsilon)^2}{\tau}\frac{1}{2}\frac{d}{dt}\int\frac{h'(N^{\kappa}+n^{0})}{N^{\kappa}+n^{0}}\left|\partial^{\alpha}N^{\kappa}\right|^{2}dx
+C\left(\left\|W^{\kappa}\right\|_{l}^{2}+\left\|W^{\kappa}\right\|_{l}^{3}+\left\|W^{\kappa}\right\|_{l}^{4}\right).
\end{align}
Substituting \eqref{i32}, \eqref{i33}, \eqref{i31} into \eqref{i3bds}, we have
\begin{align}\label{i3}
I^{(3)}&\leq -\frac{\eta(1+\varepsilon)^2}{\tau}\frac{1}{2}\frac{d}{dt}\int\frac{h'(N^{\kappa}+n^{0})}{N^{\kappa}+n^{0}}\left|\partial^{\alpha}N^{\kappa}\right|^{2}dx\notag\\
&\quad+C\left\|\partial^{\alpha}{\rm div}U^{\kappa}\right\|\left(\left\|W^{\kappa}\right\|_{l}+\left\|W^{\kappa}\right\|_{l}^{2}\right)
+C\left(\left\|W^{\kappa}\right\|_{l}^{2}+\left\|W^{\kappa}\right\|_{l}^{3}+\left\|W^{\kappa}\right\|_{l}^{4}\right).
\end{align}
By the regularity of $\left(n^{0},u^{0}\right)$ and Sobolev's imbedding theorem, we get
\begin{align}\label{i4}
I^{(4)}\lesssim\left\| U^{\kappa}\right\|_{l+1}\left\| U^{\kappa}\right\|_{l}\left(1+\left\| U^{\kappa}\right\|_{l}\right)
\lesssim\left\| U^{\kappa}\right\|_{l+1}\left(\left\|W^{\kappa}\right\|_{l}+\left\|W^{\kappa}\right\|_{l}^{2}\right).
\end{align}
Similarly, we have
\begin{align}\label{i5}
I^{(5)}\lesssim\left\| U^{\kappa}\right\|^2_{l},
\end{align}
\begin{align}\label{i9}
I^{(9)}\lesssim\kappa\left\|\kappa\tilde{j}^{\kappa}\right\|_l\left\|B^\kappa\right\|_l\left\| U^{\kappa}\right\|_{l}.
\end{align}
Integrating by parts, the regularity of $(n^0, u^0)$, Sobolev's imbedding theorem and the bounds on $N^{\kappa}$ and $n^{0}$ imply that
\begin{align}\label{i6}
I^{(6)}\lesssim\left\| U^{\kappa}\right\|_{l+1}\left\|\kappa\tilde{j}^{\kappa}\right\|_l,
\end{align}
\begin{align}\label{i78}
I^{(7)},I^{(8)}\lesssim\left\| U^{\kappa}\right\|_{l+1}\left\| N^{\kappa}\right\|_{l}.
\end{align}

Inserting \eqref{i1}, \eqref{i2}, \eqref{i3}, \eqref{i4}--\eqref{i78} into \eqref{hoe}, we conclude that
\begin{align}\label{hoeresult}
\begin{split}
\frac{1}{2}&\frac{d}{dt}\left\|\partial^{\alpha}U^{\kappa}\right\|^{2}+\mu\int\frac{1}{N^{\kappa}+n^{0}}\left|\partial^{\alpha}\nabla U^{\kappa}\right|^{2}dx
+(\mu+\lambda)\int\frac{1}{N^{\kappa}+n^{0}}\left|\partial^{\alpha}{\rm div} U^{\kappa}\right|^{2}dx\\
&\quad+\frac{\eta(1+\varepsilon)^2}{\tau}\frac{1}{2}\frac{d}{dt}\int\frac{h'(N^{\kappa}+n^{0})}{N^{\kappa}+n^{0}}\left|\partial^{\alpha}N^{\kappa}\right|^{2}dx\\
&\leq \sigma_{\alpha}\|U^{\kappa}\|_{l+1}^{2}+C({\sigma_{\alpha}})\left(\left\|W^{\kappa}\right\|_{l}^{2}+\left\|W^{\kappa}\right\|_{l}^{3}+\left\|W^{\kappa}\right\|_{l}^{4}+
\left\|W^{\kappa}\right\|_{l}^{5}+\left\|W^{\kappa}\right\|_{l}^{6}\right),
\end{split}
\end{align}
for some small constant $\sigma_{\alpha}$, which will be determined later.

Applying a same argument on \eqref{2.3}, one has
\begin{align}\label{jresult}
\begin{split}
\frac{1}{2}&\frac{d}{dt}\left\|\kappa\partial^{\alpha}\tilde{j}^{\kappa}\right\|^{2}+\mu\int\frac{1}{N^{\kappa}+n^{0}}
\left|\kappa\partial^{\alpha}\nabla\tilde{j}^{\kappa}\right|^{2}dx
+(\mu+\lambda)\int\frac{1}{N^{\kappa}+n^{0}}\left|\kappa\partial^{\alpha}{\rm div}\tilde{j}^{\kappa}\right|^{2}dx\\
&\leq \sigma_{\alpha}\|\kappa\tilde{j}^{\kappa}\|_{l+1}^{2}+C({\sigma_{\alpha}})\left(\left\|W^{\kappa}\right\|_{l}^{2}+
\left\|W^{\kappa}\right\|_{l}^{3}+\left\|W^{\kappa}\right\|_{l}^{4}+
\left\|W^{\kappa}\right\|_{l}^{5}+\left\|W^{\kappa}\right\|_{l}^{6}\right).
\end{split}
\end{align}
Employing the operator $\partial^{\alpha}$ to \eqref{2.4} and \eqref{2.5},  and then taking the inner product of the resulting equations, $\partial^{\alpha}E^{\kappa}$ and $\partial^{\alpha}G^{\kappa}$ respectively, we have
\begin{align}\label{egresult}
\frac{1}{2}\frac{d}{dt}\left(\left\|\partial^{\alpha}E^{\kappa}\right\|^{2}+\left\|\partial^{\alpha}G^{\kappa}\right\|^{2}\right)
\lesssim\left\|W^{\kappa}\right\|_{l}^{2}+\left\|W^{\kappa}\right\|_{l}^{3}.
\end{align}
Applying the bounds on $N^{\kappa}+n^{0}$ stated in Proposition \ref{solu n0u0} and Proposition \ref{solu nk}, combining \eqref{hoeresult}, \eqref{jresult}, \eqref{egresult} and Lemma \ref{lem1}, summing $\alpha$ over $1\leq\alpha\leq l$, and choosing $\sigma_{\alpha}$ small enough, we obtain the desired result of Lemma \ref{lem2}.
\end{proof}

\subsection{Proof of Theorem \ref{main result}}\label{sec4}
Now, we are ready to prove Theorem \ref{main result}.
\begin{proof}
As in \cite{jf}, set $\Gamma^{\kappa}(t)=\left\|W^{\kappa}(t)\right\|^{2}_{l}$. By Lemma \ref{lem2}, for any $t\in(0,T)$ and sufficiently small $\kappa$, we have
\begin{align}\label{5.1}
\Gamma^{\kappa}(t)\leq \Gamma^{\kappa}(0)+C\int_{0}^{t}\left(\Gamma^{\kappa}\left(1+\left(\Gamma^{\kappa}\right)^{\frac{1}{2}}+
\Gamma^{\kappa}+\left(\Gamma^{\kappa}\right)^{\frac{3}{2}}+\left(\Gamma^{\kappa}\right)^{2}\right)\right)(\tau)d\tau.
\end{align}
Choose $\kappa_{1}>0$ and $T_{1}\in(0,1)$ small enough such that
\begin{align}\label{5.2}
1+\left(3C\right)^{\frac{1}{2}}\kappa_{1}+3C\kappa_{1}^2+\left(3C\right)^{\frac{3}{2}}\kappa_{1}^3+\left(3C\right)^{2}\kappa_{1}^4\leq2,\;\;1+2CT_1e^{2CT_1}\leq\frac{3}{2}.
\end{align}
Suppose that $\Gamma^{\kappa}(t)\leq3C\kappa^2$. Noted that by using the assumption $\Gamma^{\kappa}(0)\leq C_{0}\kappa^{2}$ in Theorem\ref{main result}, \eqref{5.1} and \eqref{5.2}, for any $\kappa\in(0,\kappa_{1})$ and $t\in(0, T_{1}]$, we have
\begin{align}\label{5.3}
\begin{split}
\Gamma^{\kappa}(t)&\leq C\kappa^2+
C\int_{0}^{t}\Gamma^{\kappa}(\tau)\left(1+\left(3C\right)^{\frac{1}{2}}\kappa+3C\kappa^2+\left(3C\right)^{\frac{3}{2}}\kappa^3+\left(3C\right)^{2}\kappa^4\right)d\tau\\
&\leq C\kappa^2+2C\int_{0}^{t}\Gamma^{\kappa}(\tau)d\tau,
\end{split}
\end{align}
which combined with Gronwall's inequality and \eqref{5.2} implies that
\begin{align}\label{5.4}
\Gamma^{\kappa}(t)\leq C\kappa^2\left(1+2Cte^{2Ct}\right)\leq C\kappa^2\left(1+2CT_1e^{2CT_1}\right)\leq\frac{3}{2}C\kappa^2.
\end{align}
By the bootstrap principle, we conclude that $\Gamma^{\kappa}(t)\leq3C\kappa^2$ for any $\kappa\in(0,\kappa_{1})$ and $t\in(0, T_{1}]$. Hence, by standard continuous induction method, for any $T_{0}< T^{*}$, there exists a positive constant $\kappa_{0}(T_{0})$, such that for any $\kappa\in(0,\kappa_{0})$, $T_{\kappa}\geq T_{0}$ and $\Gamma^{\kappa}(t)\leq \tilde{C}\kappa^{2}$ on $[0,T_{0}]$, for some constant $\tilde{C}$ depending only on $T_{0}$ and  the initial data. Therefore, we complete the proof of Theorem \ref{main result}.
\end{proof}

\section*{Acknowledgments}
The research was supported by the National Natural Science Foundation of China (No. 11901066), the
Natural Science Foundation of Chongqing (No. cstc2019jcyj-msxmX0167) and projects No. 2019CDXYST0015,  No. 2020CDJQY-A040 supported by the Fundamental Research Funds for the Central Universities.

\end{document}